\newcommand\al{\alpha}
\newcommand\ze{\zeta}
\newcommand\et{\eta}
\newcommand\io{\iota}
\newcommand\ph{\varphi}
\newcommand\om{\omega}
\newcommand\Ga{\Gamma}
\newcommand\Ph{\Phi}
\newcommand\Om{\Omega}
\newcommand\oo{{\infty}}
\newcommand\x{\times}
\newcommand\N{\mathcal N}
\newcommand\T{\mathcal T}
\newcommand\g{\mathfrak g}
\newcommand\hamoe{\mathfrak{ham}}
\newcommand\ori{\textnormal{or}}
\newcommand\KKS{\textnormal{KKS}}
\newcommand\symp{\textnormal{symp}}
\DeclareMathOperator{\Ad}{Ad}
\DeclareMathOperator{\ad}{ad}
\DeclareMathOperator{\Emb}{Emb}
\DeclareMathOperator{\ev}{ev}
\DeclareMathOperator{\pr}{pr}
\DeclareMathOperator{\Ham}{Ham}
\DeclareMathOperator{\Flag}{Flag}
\DeclareMathOperator{\pFlag}{Fr}
\DeclareMathOperator{\Diff}{Diff}
\DeclareMathOperator{\Symp}{Symp}
\DeclareMathOperator{\id}{id}
\DeclareMathOperator{\Gr}{Gr}
\theoremstyle{plain}
\newtheorem{theorem}{Theorem}[section]
\newtheorem{lemma}[theorem]{Lemma}
\newtheorem{proposition}[theorem]{Proposition}
\theoremstyle{remark}
\newtheorem{remark}[theorem]{Remark}
\newtheorem{example}[theorem]{Example}
\theoremstyle{definition}
\begin{document}

\title{Nonlinear flag manifolds as coadjoint orbits}

\author{Stefan Haller}

\address{Stefan Haller,
         Department of Mathematics,
         University of Vienna,
         Oskar-Mor\-gen\-stern-Platz~1,
         1090~Vienna,
         Austria.}

\email{stefan.haller@univie.ac.at}

\author{Cornelia Vizman}

\address{Cornelia Vizman,
         Department of Mathematics,
         West University of Timi\c soara,
         Bd. V.P\^arvan~4,
         300223-Timi\c soara,
         Romania.}

\email{cornelia.vizman@e-uvt.ro}

\begin{abstract}
A nonlinear flag is a finite sequence of nested closed submanifolds.
We study the geometry of Fr\'echet manifolds of nonlinear flags, in this way generalizing the nonlinear Grassmannians.
As an application we describe a class of coadjoint orbits of the group of Hamiltonian diffeomorphisms that consist of nested symplectic submanifolds, i.e., symplectic nonlinear flags.
\end{abstract}

\keywords{nonlinear flag manifolds; nonlinear Grassmannians; groups of diffeomorphisms; spaces of embeddings; Fr\'echet manifold; moment map; coadjoint orbits}

\subjclass[2010]{58D10 (primary); 37K65; 53C30; 53D20; 58D05}

\maketitle


\section{Introduction}

Let $M$ be a smooth manifold and suppose $S_1,\dotsc,S_r$ are closed smooth manifolds.
A nonlinear flag of type $\mathcal S=(S_1,\dotsc,S_r)$ in $M$ is sequence of nested embedded submanifolds $N_1\subseteq\cdots\subseteq N_r\subseteq M$ such that $N_i$ is diffeomorphic to $S_i$ for all $i=1,\dotsc,r$.
The space of all nonlinear flags of type $\mathcal S$ in $M$ can be equipped with the structure of a Fr\'echet manifold in a natural way and will be denoted by $\Flag_{\mathcal S}(M)$.
The aim of this paper is to study the geometry of this space.

Nonlinear flag manifolds provide a natural generalization of nonlinear Grassmannians which correspond to the case $r=1$.
Nonlinear Grassmannians (a.k.a.\ differentiable Chow manifolds) play an important role in computer vision \cite{BBM14,M16} and continuum mechanics \cite{M20}.
They have also been used to describe coadjoint orbits of diffeomorphism groups.
Nonlinear Grassmannians of symplectic submanifolds have been identified with coadjoint orbits of the Hamiltonian group in \cite{HV}.
Codimension two Grassmannians have been used to describe coadjoint orbits of the group of volume preserving diffeomorphisms \cite{I96,HV}.
Let us also point out that every closed $k$-fold vector cross product on a Riemannian manifold induces an almost K\"ahler structure on the nonlinear Grassmannians of $(k-1)$-dimensional submanifolds \cite{LL07}.

In some applications decorated nonlinear Grassmannians have been considered, that is, spaces of submanifolds equipped with additional data supported on the submanifold.
Functional shapes (fshapes), for instance, may be described as signal functions supported on shapes \cite{CCT,CCT2,CT}.
Weighted nonlinear Grassmannians of isotropic submanifolds have been used to describe coadjoint orbits of the Hamiltonian group \cite{GBV17,Lee,W90}.
Recently, weighted nonlinear Grassmannians of isotropic submanifolds have been identified with coadjoint orbits of the contact group \cite{HV19}.
Decorated codimension one Grassmannians may be used to describe coadjoint orbits of the group of volume preserving diffeomorphisms \cite{GBV19}.
The nonlinear flag manifolds considered in this paper may be regarded as yet another class of decorated Grassmannians.

Some nonlinear flag manifolds have already appeared in the literature too.
Landmark-constrained planar curves, for instance, have been used in a statistical elastic shape analysis framework in \cite{SKBM}.
Landmark-constrained surfaces in the context of shape analysis are being discussed in \cite[Chapter~6]{JKLS}.
An attempt to use the nonlinear flag manifold of surfaces in $\mathbb R^3$ decorated with curves as shape space can be found in \cite{TV}.
Manifolds of weighted nonlinear flags are the object of study in \cite{HV20}.
We hope that the foundational material on nonlinear flag manifolds provided in this paper will prove helpful in future research.

As a first application, we will use nonlinear flag manifolds to describe certain coadjoint orbits of the Hamiltonian group.
To be more explicit, suppose $M$ is a closed symplectic manifold and let $\Flag_{\mathcal S}^\symp(M)$ denote the open subset in $\Flag_{\mathcal S}(M)$ consisting of all symplectic flags of type $\mathcal S$.
The symplectic form on $M$ induces by transgression a symplectic form on the manifold of symplectic nonlinear flags.
The Hamiltonian group $\Ham(M)$ acts on $\Flag_{\mathcal S}^\symp(M)$ in a Hamiltonian fashion with equivariant moment map
\[
J\colon\Flag_{\mathcal S}^\symp(M)\to\hamoe(M)^*.
\]
This moment map is injective and identifies each connected component of $\Flag_{\mathcal S}^\symp(M)$ with a coadjoint orbit of $\Ham(M)$, see Theorem~\ref{teo2} below.

The remaining part of this paper is organized as follows.
Section~\ref{S:Flag} contains a rigorous study of the Fr\'echet manifold $\Flag_{\mathcal S}(M)$ and related principal bundles.
In Section~\ref{S:oFlag} we discuss the oriented analogue, that is, the Fr\'echet manifold of all oriented nonlinear flags, a finite covering of $\Flag_{\mathcal S}(M)$.
In Section~\ref{S:Flagsymp} we study the action of the Hamiltonian group on the open subset of symplectic flags and provide a proof of Theorem~\ref{teo2} mentioned before.

\subsection*{Acknowledgments}
The first author would like to thank the West University of Ti\-mi\-\c soa\-ra for the warm hospitality.
He gratefully acknowledges the support of the Austrian Science Fund (FWF): project numbers P31663-N35 and Y963-N35.
The second author would like to thank the University of Vienna for the warm hospitality.
She was supported by the grant PN-III-P4-ID-PCE-2016-0778 of the Romanian
Ministry of Research and Innovation CNCS-UEFISCDI, within PNCDI III.
Both authors would like to express their gratitude to Martin Bauer and Peter Michor for helpful bibliographical comments.

\section{Manifolds of nonlinear flags}\label{S:Flag}

Let $M$ be a smooth manifold and suppose $S_1,\dotsc,S_r$ are closed smooth manifolds.
In this section we study the space $\Flag_{\mathcal S}(M)$ of all nonlinear flags of type $\mathcal S=(S_1,\dotsc,S_r)$ in $M$, i.e., the space of all nested submanifolds $N_1\subseteq\dots\subseteq N_r$ of $M$ that arise from embedding $S_1,\dotsc,S_r$ into $M$.
We will equip this space with the structure of a Fr\'echet manifold, describe its smooth structure in several ways, and discuss related (principal) bundles systematically.

In Proposition~\ref{L1} we will show that $\Flag_{\mathcal S}(M)$ may be considered as smooth submanifold in the product of nonlinear Grassmannians, $\Gr_{S_1}(M)\times\cdots\times\Gr_{S_r}(M)$.
Recall that for a closed manifold $S$, the Grassmannian $\Gr_S(M)$, i.e., the space of all submanifolds in $M$ which are diffeomorphic to $S$, is a smooth Fr\'echet manifold whose tangent space at $N\in\Gr_S(M)$ can be canonically identified as $T_N\Gr_S(M)=\Gamma(TM|_N/TN)$.
The Grassmannian is the base of a (locally trivial) smooth principal bundle 
\begin{equation}\label{E:principal}
\Emb_S(M)\to\Gr_S(M),\quad\varphi\mapsto\varphi(S),
\end{equation}
with structure group $\Diff(S)$, see \cite{BF81,M80b,M80c} and \cite[Theorem~44.1]{KM}.
Recall that the space of embeddings, $\Emb_S(M)$, is a smooth Fr\'echet manifold whose tangent space at $\varphi\in\Emb_S(M)$ can be canonically identified as $T_\varphi\Emb_S(M)=\Gamma(\varphi^*TM)$.
Moreover, the group of all diffeomorphisms, $\Diff(S)$, is a Fr\'echet Lie group with Lie algebra $\mathfrak X(S)$, the Lie algebra of vector fields.
We will show that the space of nonlinear frames, i.e., the space of all parametrized flags, is the total space of a smooth principal bundle over $\Flag_{\mathcal S}(M)$ with structure group $\Diff(S_1)\times\cdots\times\Diff(S_r)$ which generalizes the fundamental frame bundle over $\Gr_S(M)$ in \eqref{E:principal}.

In Proposition~\ref{L2} we will exhibit a reduction of structure groups that permits to regard (connected components of) $\Flag_{\mathcal S}(M)$ as the base of a principal bundle with total space $\Emb_{S_r}(M)$ and structure group $\Diff(S_r;\Sigma)$, the group of diffeomorphisms preserving a certain flag $\Sigma$ in $S_r$.

In Proposition~\ref{smallest} we will show that the manifold $\Flag_{\mathcal S}(M)$ is diffeomorphic to a twisted product of two flag manifolds of shorter lengths.
Iterating this observation, one is lead to a description of $\Flag_{\mathcal S}(M)$ as a twisted product of nonlinear Grassmannians, cf.\ Remark~\ref{R:pFlagtriv}.

In Proposition~\ref{no1} we will describe (connected components of) $\Flag_{\mathcal S}(M)$ as homogeneous spaces of $\Diff_c(M)$, the group of compactly supported diffeomorphisms.
Recall that the group $\Diff_c(M)$ is a smooth Lie group with Lie algebra $\mathfrak X_c(M)$, see \cite{M80b} and \cite[Theorem~43.1]{KM}.

Evidently, the aforementioned statements on nonlinear flag manifolds can be considered as generalizations of well known facts about diffeomorphism groups, spaces of embeddings and nonlinear Grassmannians.
Since the proofs we will provide rely crucially on these classical results (and little else), we start by summarizing them in Lemma~\ref{L:classics} below.

\subsection{Background on nonlinear Grassmannians}

A submanifold will be called \emph{splitting submanifold} if the corresponding (closed) linear subspace in a submanifold chart admits a complement, cf.\ \cite[Definition~27.11]{KM}.
A subgroup $H$ in a Lie group $G$ will be called a \emph{splitting Lie subgroup} if it is a splitting submanifold of $G$.
In this case, $H$ is a Lie group with the induced structure.

Recall that an action of a Lie group $G$ on a manifold $\mathcal M$ is said to \emph{admit local smooth sections} if, for every $x\in\mathcal M$, the map provided by the action, $G\to\mathcal M$, $g\mapsto g(x)$, admits a smooth local right inverse defined in an open neighborhood of $x$.
More explicitly, we require that for every point $x\in\mathcal M$ there exists an open neighborhood $U$ of $x$ in $\mathcal M$ and a smooth map $s\colon U\to G$ such that $s(y)(x)=y$, for all $y\in U$.
In this situation we may w.l.o.g.\ moreover assume that $s(x)$ is the neutral element in $G$.
Clearly, any action which admits local smooth sections is locally and infinitesimally transitive.
In particular, its orbits are open and closed in $\mathcal M$ and, hence, they consist of several connected components of $\mathcal M$.

\begin{lemma}\label{L:classics}
Consider a smooth manifold $M$ and a closed smooth manifold $S$.
Then, for $\varphi\in\Emb_S(M)$ and $N:=\varphi(S)\in\Gr_S(M)$, the following hold true:
\begin{enumerate}[(a)]
\item\label{I:EmbGr}
The map $\Emb_S(M)\to\Gr_S(M)$, $\varphi\mapsto\varphi(S)$, is a (locally trivial) smooth principal bundle with structure group $\Diff(S)$.
\item\label{I:DiffGr} 
The $\Diff_c(M)$ action on $\Gr_S(M)$ is smooth and admits local smooth sections.
The isotropy group $\Diff_c(M;N):=\{f\in\Diff_c(M):f(N)=N\}$ is a splitting Lie subgroup in $\Diff_c(M)$ with Lie algebra $\mathfrak X_c(M;N)=\{X\in\mathfrak X_c(M):X(N)\subseteq TN\}$.
In particular, $\Gr_S(M)_N$, the $\Diff_c(M)$ orbit through $N$, consists of several connected components of $\Gr_S(M)$ and the map provided by the action, $\Diff_c(M)\to\Gr_S(M)_N$, $f\mapsto f(N)$, is a smooth principal bundle with structure group $\Diff_c(M;N)$.
Hence,
\[
\Gr_S(M)_N=\Diff_c(M)/\Diff_c(M;N)
\]
may be regarded as a homogeneous space. 
\item\label{I:DiffEmb}
The $\Diff_c(M)$ action on $\Emb_S(M)$ is smooth and admits local smooth sections. 
The isotropy group $\Diff_c(M;\varphi):=\{f\in\Diff_c(M):f\circ\varphi=\varphi\}$ is a splitting (normal) Lie subgroup in $\Diff_c(M;\varphi(S))$ with Lie algebra $\mathfrak X_c(M;\varphi)=\{X\in\mathfrak X_c(M):X\circ\varphi=0\}$.
In particular, $\Emb_S(M)_\varphi$, the $\Diff_c(M)$ orbit through $\varphi$, consists of several connected components of $\Emb_S(M)$ and the map provided by the action, $\Diff_c(M)\to\Emb_S(M)_\varphi$, $f\mapsto f\circ\varphi$, is a smooth principal bundle with structure group $\Diff_c(M;\varphi)$.
Hence,
\[
\Emb_S(M)_\varphi=\Diff_c(M)/\Diff_c(M;\varphi)
\]
may be regarded as a homogeneous space 
\item\label{I:DiffDiff}
The $\Diff_c(M;N)$ action on $\Diff_S(N)$, the manifold of all diffeomorphisms from $S$ onto $N$, is smooth and admits local smooth sections.
In particular, $\Diff_S(N)_\varphi$, the $\Diff_c(M;N)$ orbit through $\varphi$, consists of several connected components of $\Diff_S(N)$ and the map provided by the action, $\Diff_c(M;N)\to\Diff_S(N)_\varphi$, $f\mapsto f\circ\varphi$, is a smooth principal bundle with structure group $\Diff_c(M;\varphi)$.
Hence, 
\[
\Diff_S(N)_\varphi=\Diff_c(M;N)/\Diff_c(M;\varphi)
\]
may be regarded as a homogeneous space.
\end{enumerate}
\end{lemma}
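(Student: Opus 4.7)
My plan is to deduce all four parts in parallel, using the tubular-neighborhood-plus-cutoff technique; (a) is classical and I would simply cite \cite{BF81,M80b,M80c,KM}. The common ingredient is a normal tubular neighborhood $\pi\colon U\to N$ of $N=\varphi(S)$ in $M$, which produces a chart on $\Gr_S(M)$ near $N$ identified with a neighborhood of the zero section in $\Gamma(TM|_N/TN)$ via graphs, and a chart on $\Emb_S(M)$ near $\varphi$ identified with a neighborhood of $0\in\Gamma(\varphi^*TM)$ via $\psi(x)=\exp_{\varphi(x)}(v(x))$.  Combined with the reparametrization $\Diff(S)$-torsor structure on the fibers, this yields the principal bundle assertion of (a).

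For (b) and (c) the central step is constructing local smooth sections of the orbit maps $f\mapsto f(N)$ and $f\mapsto f\circ\varphi$. Fix a bump function $\chi\colon M\to[0,1]$ supported in $U$ and equal to $1$ near $N$. To a chart-parameter $s$ I would attach a compactly supported vector field $X_s\in\mathfrak X_c(M)$ obtained by multiplying the lift of $s$ to the tubular neighborhood by $\chi$, and set $f_s\in\Diff_c(M)$ to be its time-$1$ flow. In (b), $s\in\Gamma(TM|_N/TN)$ is first lifted through a fixed splitting to a normal field on $U$; in (c), $s\in\Gamma(\varphi^*TM)$ is already a field along $\varphi$ extended by $\chi$. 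A direct check then gives that $f_s(N)$ is the graph of $s$ in the first case, and $f_s\circ\varphi=\psi$ in the second, where $\psi$ corresponds to $s$ under the embedding chart. Smooth dependence $s\mapsto f_s$ reduces to smoothness of the time-$1$ flow as a map $\mathfrak X_c(M)\to\Diff_c(M)$; I would invoke \cite[Theorem~43.1]{KM}.

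Once local sections are in place, the remaining statements follow by general principles. The isotropy groups $\Diff_c(M;N)$ and $\Diff_c(M;\varphi)$ are splitting Lie subgroups because each coincides, near the identity, with the preimage of $\{s=0\}$ under the associated section, and the section then trivializes the orbit map as a principal bundle with the appropriate structure group. The Lie algebra identifications are obtained by differentiating the defining conditions $f(N)=N$ and $f\circ\varphi=\varphi$ along one-parameter subgroups. Normality of $\Diff_c(M;\varphi)$ in $\Diff_c(M;N)$ is clear from $\varphi(S)=N$, and the splitting of $\mathfrak X_c(M;\varphi)$ inside $\mathfrak X_c(M;N)$ is witnessed in tubular coordinates by the obvious decomposition into fields vanishing on $N$ and fields tangent along $N$.

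Part (d) then follows by applying the same machinery to the map $\Diff_c(M;N)\to\Diff_S(N)$, $f\mapsto f\circ\varphi$: local smooth sections are produced by the same cutoff recipe applied to a diffeomorphism $h\in\Diff(N)$ near $\id_N$, extending $h$ along the fibers of $\pi$ and tapering to $\id_M$ via $\chi$; the kernel of the map is $\Diff_c(M;\varphi)$. The anticipated obstacle throughout is verifying that the section maps are smooth between Fr\'echet manifolds; at each instance this reduces to smoothness of the flow map and of chart changes in tubular coordinates, both furnished by the convenient-calculus framework of \cite{KM}.
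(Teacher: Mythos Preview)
Your plan is close to the paper's in spirit, but the choice to build the section via the time-$1$ flow creates a genuine gap in part~(c). The paper does not use flows. It fixes a map $\alpha\colon TM\to M$ with $(\pi,\alpha)\colon TM\to M\times M$ a tubular neighborhood of the diagonal, so that $X\mapsto\alpha\circ X$ is the standard chart on $\Diff_c(M)$ at the identity, together with a linear extension operator $\sigma\colon\Gamma(\varphi^*TM)\to\Gamma_c(TM)$ satisfying $\sigma(Y)\circ\varphi=Y$. Then the single algebraic identity $\alpha\circ Y=(\alpha\circ\sigma(Y))\circ\varphi$ shows at once that $\alpha\circ Y\mapsto\alpha\circ\sigma(Y)$ is a local smooth section of $\Diff_c(M)\to\Emb_S(M)$. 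Your claimed ``direct check'' that the time-$1$ flow $f_s$ satisfies $f_s\circ\varphi=\psi$ with $\psi(x)=\exp_{\varphi(x)}(s(x))$ is false in general: the integral curve of $X_s$ through $\varphi(x)$ has initial velocity $s(x)$ but is not the geodesic, so its time-$1$ endpoint is not $\exp_{\varphi(x)}(s(x))$. You would then have to argue that $s\mapsto f_s\circ\varphi$ is nevertheless a local diffeomorphism onto a neighborhood of $\varphi$, and in the Fr\'echet setting a derivative computation at $s=0$ does not suffice. Replacing the flow of $X_s$ by $p\mapsto\alpha(X_s(p))$ repairs this immediately and is precisely what the paper does. (Your flow argument for~(b) is fine, since a fiberwise-constant vertical field on the normal bundle does integrate to fiberwise translation.)

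Two further streamlinings in the paper are worth adopting. For the splitting Lie subgroup claims the paper chooses $\alpha$ so that $X\in TN\Leftrightarrow(\pi(X),\alpha(X))\in N\times N$; in the chart $X\mapsto\alpha\circ X$ the chain $\Diff_c(M;\varphi)\subseteq\Diff_c(M;N)\subseteq\Diff_c(M)$ then corresponds exactly to the complemented linear chain $\{X:X|_N=0\}\subseteq\{X:X(N)\subseteq TN\}\subseteq\Gamma_c(TM)$, which is more transparent than your ``preimage of $\{s=0\}$ under the section'' description. For~(d) the paper does not build a separate extension of diffeomorphisms: it simply restricts the section from~(c) along the inclusion $\Diff_S(N)\subseteq\Emb_S(M)$, noting that if the target embedding has image $N$ then the section automatically lands in $\Diff_c(M;N)$. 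Part~(b) is then obtained by composing the section from~(c) with local sections of the principal bundle in~(a).
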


The statement in (\ref{I:EmbGr}) has been proved by Binz and Fischer \cite{BF81} for compact $S$.
The generalization to noncompact $S$ is due to Michor, see \cite{BF81,M80b,M80c} and \cite[Theorem~44.1]{KM}.
For a manifold $S$ with nonempty boundary, this bundle has been studied in \cite[Theorem~2.2]{GBV14}.
The statements in (\ref{I:DiffGr}), (\ref{I:DiffEmb}) and (\ref{I:DiffDiff}) appear to be well known among experts, see for instance \cite{MMM13}. 
For the sake of completeness we will now sketch a proof.

Let $\alpha\colon TM\to M$ be a smooth map such that $TM\to M\times M$, $X\mapsto(\pi(X),\alpha(X))$, is a tubular neighborhood of the diagonal, where $\pi\colon TM\to M$ denotes the tangent bundle projection.
In particular, we assume $\alpha(0_x)=x$ for all $x\in M$.
If $\mathcal U$ is a sufficiently $C^1$ small zero neighborhood in $\Gamma_c(TM)$, then
\[
\Gamma_c(TM)\supseteq\mathcal U\to\Diff_c(M),\quad X\mapsto\alpha\circ X,
\]
is a standard chart for the smooth structure on $\Diff_c(M)$ centered at the identity, see \cite[Theorem~43.1]{KM}.
We may choose $\alpha$ such that $X\in TN\Leftrightarrow(\pi(X),\alpha(X))\in N\times N$.
Thus, in the aforementioned chart, the sequence of subgroups 
\[
\Diff_c(M;\varphi)\subseteq\Diff_c(M;N)\subseteq\Diff_c(M)
\]
corresponds to the sequence of linear inclusions
\[
\{X\in\Gamma_c(TM):X|_N=0\}\subseteq\{X\in\Gamma_c(TM):X(N)\subseteq TN\}\subseteq\Gamma_c(TM).
\]
Since both linear inclusions admit complements, we see that $\Diff_c(M;\varphi)$ is a splitting Lie subgroup of $\Diff_c(M;N)$ and the latter is a splitting Lie subgroup of $\Diff_c(M)$.

Let $\sigma\colon\Gamma(\varphi^*TM)\to\Gamma_c(TM)$ be a smooth linear map such that $\sigma(Y)\circ\varphi=Y$, for all $Y\in\Gamma(\varphi^*TM)$.
If $\mathcal V$ is a sufficiently small zero neighborhood in $\Gamma(\varphi^*TM)$, then $\sigma(\mathcal V)\subseteq\mathcal U$ and $\Gamma(\varphi^*TM)\supseteq\mathcal V\to\Emb_S(M)$, $Y\mapsto\alpha\circ Y$, is a standard chart for the smooth structure on $\Emb_S(M)$ centered at $\varphi$, see \cite[Theorem~42.1]{KM}.
By construction, $\alpha\circ Y=(\alpha\circ\sigma(Y))\circ\varphi$.
Therefore, $\alpha\circ Y\mapsto\alpha\circ\sigma(Y)$, is a local smooth section for the $\Diff_c(M)$ action on $\Emb_S(M)$, whence (\ref{I:DiffEmb}).

Composing local smooth sections for the $\Diff_c(M)$ action on $\Emb_S(M)$ with local smooth sections of the frame bundle $\Emb_S(M)\to\Gr_S(M)$, we obtain local smooth sections for the action of $\Diff_c(M)$ on $\Gr_S(M)$, whence (\ref{I:DiffGr}).
Restricting local smooth sections for the $\Diff_c(M)$ action on $\Emb_S(M)$ along the inclusion $\Diff_S(N)\subseteq\Emb_S(M)$, we obtain local smooth sections for the action of $\Diff_c(M;N)$ on $\Diff_S(N)$, whence (\ref{I:DiffDiff}).

Note that the first assertion in Lemma~\ref{L:classics}(\ref{I:DiffEmb}) may be considered as a strengthening of the classical isotopy extension theorem \cite[Theorem~1.3 in Chapter~8]{Hirsch}.

We will also use the following simple fact.

\begin{remark}\label{R:GrSLM}
If $L$ is a closed submanifold in $M$, then $\Gr_S(L)$ is a splitting smooth submanifold in $\Gr_S(M)$.
Indeed, given $N\in\Gr_S(L)$, we may use a tubular neighborhood $\psi\colon TM|_N/TN\to M$ of $N$ in $M$ with the property $X\in TL|_N/TN\Leftrightarrow\psi(X)\in L$ to write down a local chart for $\Gr_S(M)$
centered at $N$,
\[
\Gamma(TM|_N/TN)\to\Gr_S(M),\quad\xi\mapsto\psi(\xi(N)),
\]
in which the inclusion $\Gr_S(L)\subseteq\Gr_S(M)$ corresponds to the linear inclusion $\Gamma(TL|_N/TN)\subseteq\Gamma(TM|_N/TN)$.
Clearly, $\Gamma(TL|_N/TN)$ admits a complement in $\Gamma(TM|_N/TN)$ which is isomorphic to $\Gamma(TM|_N/TL|_N)$.
\end{remark}

\subsection{The fundamental frame bundle}

Suppose $S_1,\dotsc,S_r$ are closed smooth manifolds and put $\mathcal S:=(S_1,\dotsc,S_r)$.
Let 
\[
\Flag_{\mathcal S}(M):=\Flag_{S_1,\dotsc,S_r}(M):=\left\{\N=(N_1,\dotsc,N_r)\in\prod_{i=1}^r\Gr_{S_i}(M)\middle|\forall i:N_i\subseteq N_{i+1}\right\}
\]
denote the space of all \emph{nonlinear flags of type $\mathcal S$} in a smooth, possibly noncompact manifold $M$.
The group $\Diff(M)$ acts in an obvious way from the left on $\Flag_{\mathcal S}(M)$.
Furthermore, let 
\[
\pFlag_{\mathcal S}(M):=\left\{\Ph=(\varphi_1,\dotsc,\varphi_r)\in\prod_{i=1}^r\Emb_{S_i}(M)\middle|\forall i:\varphi_i(S_i)\subseteq\varphi_{i+1}(S_{i+1})\right\}
\]
denote the space of all parametrized nonlinear flags, i.e., \emph{nonlinear frames of type $\mathcal S$ in $M$.}
Note that the group 
\[
\Diff(\mathcal S):=\prod_{i=1}^r\Diff(S_i)
\]
acts from the right on $\pFlag_{\mathcal S}(M)$ and this action commutes with the left action of $\Diff(M)$.

\begin{proposition}\label{L1}
In this situation the following hold true:
\begin{enumerate}[(a)]
\item\label{I:Flag}
$\Flag_{\mathcal S}(M)$ is a splitting smooth submanifold of\/ $\prod_{i=1}^r\Gr_{S_i}(M)$ with tangent space
\begin{equation}\label{tnf}
T_{\mathcal N}\Flag_{\mathcal S}(M)
=\left\{(\xi_1,\dotsc,\xi_r)\in\prod_{i=1}^r\Gamma(TM|_{N_i}/TN_i)\middle|\forall i:\xi_{i+1}|_{N_i}=\xi_i\textnormal{ mod } TN_{i+1}|_{N_i}\right\}
\end{equation}
at $\mathcal N=(N_1,\dotsc,N_r)\in\Flag_{\mathcal S}(M)$.
\item\label{I:pFlag}
$\pFlag_{\mathcal S}(M)$ is a splitting smooth submanifold of\/ $\prod_{i=1}^r\Emb_{S_i}(M)$ with tangent space
\begin{align*}
T_\Phi\pFlag_{\mathcal S}(M)
=\left\{(X_1,\dotsc,X_r)\in\prod_{i=1}^r\Gamma(\varphi_i^*TM)\middle|\begin{array}{c}\forall i:X_{i+1}\circ(\ph_{i+1})^{-1}\circ\ph_i=X_i\\\textnormal{ mod }\varphi_i^*\bigl(T(\varphi_{i+1}(S_{i+1}))\bigr)\end{array}\right\}
\end{align*}
at $\Phi=(\varphi_1,\dotsc,\varphi_r)\in\pFlag_{\mathcal S}(M)$.
\item\label{I:pFlagFlag}
The canonical map, 
\begin{equation}\label{frames}
\pFlag_{\mathcal S}(M)\to\Flag_{\mathcal S}(M),\quad(\varphi_1,\dotsc,\varphi_r)\mapsto\bigl(\varphi_1(S_1),\dotsc,\varphi_r(S_r)\bigr),
\end{equation} 
is a $\Diff(M)$ equivariant smooth principal fiber bundle with structure group $\Diff(\mathcal S)$.
\end{enumerate}
\end{proposition}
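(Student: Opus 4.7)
My plan is to establish parts (a) and (b) by exhibiting product-type charts on $\prod_i\Gr_{S_i}(M)$ and $\prod_i\Emb_{S_i}(M)$ in which the nesting conditions cut out a splitting linear subspace, and then to deduce (c) from the resulting charts together with the classical principal bundle structure recalled in Lemma~\ref{L:classics}(\ref{I:EmbGr}).

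For (a), I fix a flag $\mathcal N=(N_1,\dotsc,N_r)\in\Flag_{\mathcal S}(M)$ and construct a family of nested tubular neighborhoods $\psi_i\colon TM|_{N_i}/TN_i\supseteq V_i\to M$ of $N_i\subseteq M$, built recursively: starting from a tubular neighborhood $\psi_r$ of $N_r$ in $M$, at each step I combine a tubular neighborhood of $N_i$ inside $N_{i+1}$ with $\psi_{i+1}$ to define $\psi_i$. The point of this construction is to achieve the nesting compatibility
\[
\psi_i(\xi_i(N_i))\subseteq\psi_{i+1}(\xi_{i+1}(N_{i+1}))\quad\Longleftrightarrow\quad\xi_{i+1}|_{N_i}\equiv\xi_i\pmod{TN_{i+1}|_{N_i}}
\]
for all sections $\xi_i\in\Gamma(TM|_{N_i}/TN_i)$ sufficiently close to zero. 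With this in hand, the map $(\xi_1,\dotsc,\xi_r)\mapsto(\psi_1(\xi_1(N_1)),\dotsc,\psi_r(\xi_r(N_r)))$ is a product of the standard tubular-neighborhood charts on the factors $\Gr_{S_i}(M)$, and its preimage of $\Flag_{\mathcal S}(M)$ is exactly the closed linear subspace described by \eqref{tnf}. To see that this subspace is splitting, I would produce a continuous linear projection onto it, built inductively from continuous linear right-inverses to the restriction maps $\Gamma(TM|_{N_{i+1}}/TN_{i+1})\to\Gamma((TM|_{N_{i+1}}/TN_{i+1})|_{N_i})$; such right-inverses are standard, obtained via extension by a tubular neighborhood of $N_i$ in $N_{i+1}$ combined with a cut-off function. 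The formula for $T_{\mathcal N}\Flag_{\mathcal S}(M)$ then follows by differentiating the chart at the origin.

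Part (b) proceeds by the same strategy, using the standard chart $\Gamma(\varphi_i^*TM)\to\Emb_{S_i}(M)$ centered at $\varphi_i$ in place of the Grassmannian chart; the condition $\varphi'_i(S_i)\subseteq\varphi'_{i+1}(S_{i+1})$ again translates into a linear compatibility once one precomposes with $\varphi_i$, and the same splittability argument applies. For (c), the $\Diff(\mathcal S)$ action on the fibers of \eqref{frames} is free and transitive by uniqueness of reparameterization, so it suffices to produce smooth local sections. Given a base frame $\Phi=(\varphi_1,\dotsc,\varphi_r)\in\pFlag_{\mathcal S}(M)$ over $\mathcal N$, for any nearby flag $\mathcal N'$ with coordinates $(\xi_1,\dotsc,\xi_r)$ in the chart of (a), I set $\varphi'_i:=\psi_i\circ\xi_i\circ\varphi_i\in\Emb_{S_i}(M)$; the nesting compatibility above guarantees that $\Phi'=(\varphi'_1,\dotsc,\varphi'_r)\in\pFlag_{\mathcal S}(M)$, yielding the desired smooth local section and hence a local trivialization of \eqref{frames}. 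Equivariance with respect to the $\Diff(M)$ left actions is immediate from the definitions.

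The main obstacle I foresee is the careful construction of the nested tubular neighborhoods with the required compatibility, together with the verification that the closed linear subspace of compatible section-tuples admits a continuous linear complement. Once these analytic ingredients are in place, the remainder of the proof is a routine unwinding of definitions that closely parallels the single-Grassmannian arguments recalled in Lemma~\ref{L:classics}.
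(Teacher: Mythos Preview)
Your approach to part~(a) contains a genuine obstruction: the nesting compatibility you claim for a \emph{product} of tubular-neighborhood charts cannot be achieved. Consider the simplest case $r=2$ with $N_1=\{p\}$ a point on a curve $N_2=C\subseteq M$. Writing $\xi_1=(\eta,\zeta)\in T_pC\oplus\nu_pC$, your biconditional in the direction ``$\Leftarrow$'' would force $\psi_1(\xi_1)\in\psi_2(\xi_2(C))$ for \emph{every} section $\xi_2$ with $\xi_2(p)=\zeta$. But the intersection $\bigcap_{\xi_2:\,\xi_2(p)=\zeta}\psi_2(\xi_2(C))$ reduces to the single point $\psi_2(\zeta)$: any other point $\psi_2(v)$ with $v\in\nu_zC$, $z\neq p$, is excluded by choosing a small $\xi_2$ with $\xi_2(z)\neq v$. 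Hence $\psi_1(\eta,\zeta)$ would be independent of the tangential component $\eta$, contradicting that $\psi_1$ is a tubular neighborhood. No recursive construction of $\psi_i$ can repair this; a product of fixed submanifold charts on $\prod_i\Gr_{S_i}(M)$ simply cannot linearize the containment condition $N_i'\subseteq N_{i+1}'$.

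The paper circumvents this by abandoning product charts: it uses the local smooth sections $N_r'\mapsto f_{N_r'}$ of the $\Diff_c(M)$ action on $\Gr_{S_r}(M)$ from Lemma~\ref{L:classics}(\ref{I:DiffGr}) to build a \emph{twisted} diffeomorphism $(N_1',\dotsc,N_r')\mapsto\bigl(f_{N_r'}^{-1}(N_1'),\dotsc,f_{N_r'}^{-1}(N_{r-1}'),N_r'\bigr)$ of $\prod_{i<r}\Gr_{S_i}(M)\times U$, which carries $\Flag_{\mathcal S}(M)$ onto $\Flag_{S_1,\dotsc,S_{r-1}}(N_r)\times U$; induction on $r$ together with Remark~\ref{R:GrSLM} then finishes~(a). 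Parts~(b) and~(c) follow at once, since $\pFlag_{\mathcal S}(M)$ is the full preimage of $\Flag_{\mathcal S}(M)$ under the product principal bundle $\prod_i\Emb_{S_i}(M)\to\prod_i\Gr_{S_i}(M)$, so the separate arguments you sketch for them are unnecessary.
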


\begin{proof}
It is well known that $\Emb_{S_i}(M)\to\Gr_{S_i}(M)$ is a smooth principal fiber bundle with structure group $\Diff(S_i)$, see Lemma~\ref{L:classics}(\ref{I:EmbGr}).
Hence, the product of these maps, 
\begin{equation}\label{E:pEmbSi}
\prod_{i=1}^r\Emb_{S_i}(M)\to\prod_{i=1}^r\Gr_{S_i}(M),
\end{equation}
is a smooth principal fiber bundle with structure group $\prod_{i=1}^r\Diff(S_i)$.
Clearly, $\pFlag_{\mathcal S}(M)$ is the preimage of $\Flag_{\mathcal S}(M)$ under the map \eqref{E:pEmbSi}.
Therefore, it suffices to show (\ref{I:Flag}).

We will prove (\ref{I:Flag}) by induction on $r$.
Suppose $N_r\in\Gr_{S_r}(M)$.
Since the $\Diff_c(M)$ action on $\Gr_{S_k}(M)$ admits local smooth sections, see Lemma~\ref{L:classics}(\ref{I:DiffGr}), there exists an open neighborhood $U$ of $N_r$ in $\Gr_{S_r}(M)$ and a smooth map $U\to\Diff_c(M)$, $N'_r\mapsto f_{N'_r}$, such that $f_{N_r}=\id$ and $f_{N'_r}(N_r)=N'_r$ for all $N'_r\in U$.
We obtain a diffeomorphism
\begin{equation}\label{olong}
\prod_{i=1}^{r-1}\Gr_{S_i}(M)\times U\to\prod_{i=1}^{r-1}\Gr_{S_i}(M)\times U,
\quad(N'_1,\dotsc,N'_r)\mapsto\bigl(f_{N'_r}^{-1}(N'_1),\dotsc,f_{N'_r}^{-1}(N'_{r-1}),N'_r\bigr).
\end{equation}
Clearly, this diffeomorphism maps the part of $\Flag_{\mathcal S}(M)$ contained in $\prod_{i=1}^{r-1}\Gr_{S_i}(M)\times U$ onto the subset $\Flag_{S_1,\dotsc,S_{r-1}}(N_r)\times U$ of $\prod_{i=1}^{r-1}\Gr_{S_i}(M)\times U$.
By induction, $\Flag_{S_1,\dots,S_{r-1}}(N_r)$ is a splitting smooth submanifold of $\prod_{i=1}^{r-1}\Gr_{S_i}(N_r)$.
Moreover, $\Gr_{S_i}(N_r)$ is a splitting smooth submanifold of $\Gr_{S_i}(M)$ according to Remark~\ref{R:GrSLM}.
Combining these two statements, we conclude that $\Flag_{S_1,\dotsc,S_{r-1}}(N_r)\times U$ is a splitting smooth submanifold of $\prod_{i=1}^{r-1}\Gr_{S_i}(M)\times U$.
Together with \eqref{olong}, this shows that $\Flag_{\mathcal S}(M)$ is a splitting smooth submanifold in $\prod_{i=1}^r\Gr_{S_i}(M)$.
It is straightforward to track the tangent spaces through this inductive proof and establish the description in \eqref{tnf}.
\end{proof}

\begin{remark}
Note that the principal $\Diff(\mathcal S)$ bundle \eqref{frames} is the restriction of the principal bundle in \eqref{E:pEmbSi} along the inclusion $\Flag_{\mathcal S}(M)\subseteq\prod_{i=1}^r\Gr_{S_i}(M)$.
\end{remark}

\begin{remark}[Riemannian metric]\label{not}
The choice of a Riemannian metric on $M$ provides an identification of the normal bundle $TM|_{N_i}/TN_i$ with the Riemannian orthogonal bundle denoted by $TN_i^\perp\subseteq TM|_{N_i}$.
Thus, $N_i\subseteq N_{i+1}$ implies $TN_{i+1}^\perp|_{N_i}\subseteq TN_i^\perp$.
For $i<r$, we denote the orthogonal complement of $TN_i$ in $TN_{i+1}|_{N_i}$ by $TN_i^\dagger\subseteq TN_{i+1}|_{N_i}$. 
Clearly, $TN_i^\dagger\subseteq TN_i^\perp$ and we have the Riemannian orthogonal decomposition
\[
TN_i^\perp=TN_{i+1}^\perp|_{N_i}\oplus TN_{i}^\dagger,
\]
with orthogonal projections denoted by $p_i^\perp$ and $p_i^\dagger$.
Now the tangent space
\eqref{tnf} can be identified with
\[
T_{\mathcal N}\Flag_{\mathcal S}(M)\cong\left\{(\xi_1,\dotsc,\xi_r)\in\prod_{i=1}^r\Gamma(TN_i^\perp)\middle|\forall i:p_i^\perp(\xi_i)=\xi_{i+1}|_{N_i}\right\}.
\]
The only freedom for the $\xi_i$, $i< r$, is in their projections $\eta_i=p_i^\dagger(\xi_i)\in\Gamma(TN_i^\dagger)$, since $\xi_i=\xi_{i+1}|_{N_i}+\eta_i$.
Thus we get a further identification of the tangent bundle,
\begin{equation}\label{tnt}
T_{\mathcal N}\Flag_{\mathcal S}(M)
\cong\prod_{i=1}^{r-1}\Gamma(TN_i^\dagger)\times\Gamma(TN_r^\perp)
\cong\prod_{i=1}^{r-1}\Gamma(TN_{i+1}|_{N_i}/TN_i)\times\Gamma(TM|_{N_r}/TN_r).
\end{equation}
Note that these identifications are invariant under the group of isometries of $M$, but not $\Diff(M)$ invariant.
\end{remark}

\subsection{A tower of Grassmannians}\label{SS:twist}

Suppose, for a moment, that $\mathcal S=(S_1,S_2)$ consist of just two model manifolds.
Then 
\begin{equation*}\label{tower}
\Flag_{\mathcal S}(M)\to\Gr_{S_2}(M),\quad (N_1,N_2)\mapsto N_2,
\end{equation*}
is the associated bundle to the principal bundle $\Emb_{S_2}(M)\to\Gr_{S_2}(M)$ for the natural $\Diff(S_2)$ action on $\Gr_{S_1}(S_2)$.
To see this, we first observe that the projection $\pFlag_{\mathcal S}(M)\to\Emb_{S_2}(M)$, $(\varphi_1,\varphi_2)\mapsto\varphi_2$, is a trivializable fiber bundle with typical fiber $\Emb_{S_1}(S_2)$.
Indeed, the canonical identification
\[
\pFlag_{\mathcal S}(M)=\Emb_{S_2}(M)\times\Emb_{S_1}(S_2),\quad(\varphi_1,\varphi_2)\leftrightarrow(\varphi_2,\varphi_2^{-1}\circ\varphi_1),
\]
is a diffeomorphism, cf.\ the proof of Proposition~\ref{smallest}(\ref{I:pFlagppp}) below.
Via this identification, the natural right action of $\Diff(\mathcal S)=\Diff(S_1)\x\Diff(S_2)$ on $\pFlag_{\mathcal S}(M)$ becomes 
\[
(\varphi_2,\tilde\varphi)\cdot (g_1,g_2)=(\varphi_2\circ g_2,g_2^{-1}\circ\tilde\varphi\circ g_1),
\]
where $g_i\in\Diff(S_i)$, $\varphi_2\in\Emb_{S_2}(M)$, $\tilde\varphi\in\Emb_{S_1}(S_2)$.
Hence, the principal bundle projection $\pFlag_{\mathcal S}(M)\to\Flag_{\mathcal S}(M)$ factors as a composition of two principal bundles,
\[
\pFlag_{\mathcal S}(M)\xrightarrow{\Diff(S_1)}\Emb_{S_2}(M)\times\Gr_{S_1}(S_2)\xrightarrow{\Diff(S_2)}\Flag_{\mathcal S}(M),
\]
where the arrows are labeled with the structure groups.
Whence the required diffeomorphism of bundles over $\Gr_{S_2}(M)$,
\[
\Flag_{\mathcal S}(M)\cong\Emb_{S_2}(M)\times_{\Diff(S_2)}\Gr_{S_1}(S_2).
\]

Let us now formulate this observation for general $\mathcal S$.

\begin{proposition}\label{smallest}
Consider a decomposition of $\mathcal S=(S_1,\dots,S_r)$ into two shorter sequences $\mathcal S'=(S_1,\dots,S_{\ell})$ and $\mathcal S''=(S_{\ell+1},\dotsc,S_r)$ where $1\leq\ell<r$.
Then the following hold true:
\begin{enumerate}[(a)]
\item\label{I:pFlagppp}
The natural map
\begin{align}\label{E:17}
\pFlag_{\mathcal S}(M)&\to\pFlag_{\mathcal S''}(M)\times\pFlag_{\mathcal S'}(S_{\ell+1}),\\\notag
(\varphi_1,\dotsc,\varphi_r)&\mapsto\bigl(\varphi_{\ell+1},\dotsc,\varphi_r;\varphi_{\ell+1}^{-1}\circ\varphi_1,\dotsc,\varphi_{\ell+1}^{-1}\circ\varphi_\ell\bigr),
\end{align}
is a diffeomorphism.
In particular, the forgetful map 
$$
\pFlag_{\mathcal S}(M)\to\pFlag_{\mathcal S''}(M),\quad(\varphi_1,\dotsc,\varphi_r)\mapsto(\varphi_{\ell+1},\dotsc,\varphi_r\bigr),
$$ 
is a trivializable smooth fiber bundle with typical fiber $\pFlag_{\mathcal S'}(S_{\ell+1})$. 
\item\label{I:Flagppp}
The forgetful map 
\begin{equation}\label{oino}
\Flag_{\mathcal S}(M)\to\Flag_{\mathcal S''}(M),\quad(N_1,N_2,\dots,N_r)\mapsto (N_{\ell+1},\dots,N_r),
\end{equation}
is a smooth fiber bundle with typical fiber $\Flag_{\mathcal S'}(S_{\ell+1})$ which is canonically isomorphic to 
\begin{equation}\label{E:18}
\pFlag_{\mathcal S''}(M)\times_{\Diff(\mathcal S'')}\Flag_{\mathcal S'}(S_{\ell+1})
\to \Flag_{\mathcal S''}(M),
\end{equation}
the associated bundle to the principal bundle $\pFlag_{\mathcal S''}(M)\to\Flag_{\mathcal S''}(M)$ for the action of the structure group $\Diff(\mathcal S'')$ on $\Flag_{\mathcal S'}(S_{\ell+1})$ via its $\Diff(S_{\ell+1})$ component.
\item\label{I:diagppp}
These bundle maps fit into the following $\Diff(M)$ equivariant commutative diagram
\begin{equation}\label{name}
\vcenter{\xymatrix{
\pFlag_{\mathcal S}(M)\ar[rrr]^-{\pFlag_{\mathcal S'}(S_{\ell+1})}\ar[d]_{\Diff(\mathcal S)}&&&\pFlag_{\mathcal S''}(M)\ar[d]^{\Diff(\mathcal S'')}
\\
\Flag_{\mathcal S}(M)\ar[rrr]^-{\Flag_{\mathcal S'}(S_{\ell+1})}&&&\Flag_{\mathcal S''}(M),
}}
\end{equation}
where each arrow is labeled with its typical fiber or structure group, respectively. 
\end{enumerate}
\end{proposition}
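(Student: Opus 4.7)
My plan is to establish (\ref{I:pFlagppp}) first and then deduce (\ref{I:Flagppp}) and (\ref{I:diagppp}) from it by a clean two-step factorization of the principal $\Diff(\mathcal S)$-bundle $\pFlag_{\mathcal S}(M)\to\Flag_{\mathcal S}(M)$ provided by Proposition~\ref{L1}(\ref{I:pFlagFlag}). For part (\ref{I:pFlagppp}), the map in \eqref{E:17} is a bijection with explicit inverse
\[
\bigl(\psi_{\ell+1},\dotsc,\psi_r;\tilde\varphi_1,\dotsc,\tilde\varphi_\ell\bigr)\mapsto\bigl(\psi_{\ell+1}\circ\tilde\varphi_1,\dotsc,\psi_{\ell+1}\circ\tilde\varphi_\ell,\psi_{\ell+1},\dotsc,\psi_r\bigr),
\]
whose smoothness is immediate from smoothness of composition in the convenient calculus of \cite{KM}. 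For the forward direction, the only nontrivial point is smoothness of $(\varphi_{\ell+1},\varphi_i)\mapsto\varphi_{\ell+1}^{-1}\circ\varphi_i$ for $i\leq\ell$, which I plan to establish via the principal $\Diff(S_{\ell+1})$-bundle $\Emb_{S_{\ell+1}}(M)\to\Gr_{S_{\ell+1}}(M)$ of Lemma~\ref{L:classics}(\ref{I:EmbGr}): a local smooth section $\sigma$ near $N_{\ell+1}:=\varphi_{\ell+1}(S_{\ell+1})$ decomposes $\varphi_{\ell+1}=\sigma(N_{\ell+1})\circ h$ with $h\in\Diff(S_{\ell+1})$ depending smoothly on $\varphi_{\ell+1}$, and then $\varphi_{\ell+1}^{-1}\circ\varphi_i=h^{-1}\circ(\sigma(N_{\ell+1})^{-1}\circ\varphi_i)$ is smooth by smoothness of composition, of inversion in $\Diff(S_{\ell+1})$, and of $\sigma$.

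For part (\ref{I:Flagppp}), I transport the right $\Diff(\mathcal S)=\Diff(\mathcal S')\x\Diff(\mathcal S'')$-action on $\pFlag_{\mathcal S}(M)$ through the diffeomorphism \eqref{E:17}. As in the $r=2$ computation in Subsection~\ref{SS:twist}, the action becomes
\[
\bigl(\varphi_{\ell+1},\dotsc,\varphi_r;\tilde\varphi_1,\dotsc,\tilde\varphi_\ell\bigr)\cdot(g_1,\dotsc,g_r)=\bigl(\varphi_{\ell+1}\circ g_{\ell+1},\dotsc,\varphi_r\circ g_r;\,g_{\ell+1}^{-1}\circ\tilde\varphi_1\circ g_1,\dotsc,g_{\ell+1}^{-1}\circ\tilde\varphi_\ell\circ g_\ell\bigr),
\]
so $\Diff(\mathcal S')$ acts only on the second factor (by the principal $\Diff(\mathcal S')$-action of Proposition~\ref{L1}(\ref{I:pFlagFlag}) applied to $S_{\ell+1}$), while $\Diff(\mathcal S'')$ acts on $\pFlag_{\mathcal S''}(M)$ by its principal action and on the second factor via its $\Diff(S_{\ell+1})$ summand. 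Factoring the full quotient through the intermediate $\Diff(\mathcal S')$-quotient therefore produces $\pFlag_{\mathcal S''}(M)\times\Flag_{\mathcal S'}(S_{\ell+1})$, and the residual $\Diff(\mathcal S'')$-quotient yields precisely the associated bundle \eqref{E:18}; local triviality of \eqref{oino} with typical fiber $\Flag_{\mathcal S'}(S_{\ell+1})$ follows from local triviality of $\pFlag_{\mathcal S''}(M)\to\Flag_{\mathcal S''}(M)$.

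Part (\ref{I:diagppp}) is then a direct bookkeeping check: all four maps in \eqref{name} are induced by forgetting the first $\ell$ entries (and, vertically, passing from parametrized flags to underlying submanifolds), and the left $\Diff(M)$-action manifestly commutes with each forgetful operation and with the twisted right $\Diff(\mathcal S)$-action, so the diagram commutes $\Diff(M)$-equivariantly. I expect the smoothness assertion in part (\ref{I:pFlagppp}), specifically smoothness of ``inversion along the top embedding,'' to be the only genuine obstacle; once it is handled, the remaining two parts reduce to formal manipulations in the category of Fr\'echet principal bundles.
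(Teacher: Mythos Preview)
Your overall plan coincides with the paper's: prove (\ref{I:pFlagppp}) by exhibiting the explicit inverse and verifying smoothness in both directions, then obtain (\ref{I:Flagppp}) by factoring the $\Diff(\mathcal S)$-quotient in two stages, and read off (\ref{I:diagppp}). Your treatment of (\ref{I:Flagppp}) is in fact more explicit than the paper's, which simply cites Proposition~\ref{L1}(\ref{I:pFlagFlag}) and the associated-bundle construction \cite[Section~37.12]{KM}.

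There is, however, a gap at exactly the step you flag as the crux. You write $\varphi_{\ell+1}^{-1}\circ\varphi_i=h^{-1}\circ\bigl(\sigma(N_{\ell+1})^{-1}\circ\varphi_i\bigr)$ with $h\in\Diff(S_{\ell+1})$ depending smoothly on $\varphi_{\ell+1}$; the factor $h^{-1}$ is fine, but $\sigma(N_{\ell+1})^{-1}\circ\varphi_i$ still involves inverting an embedding whose image $N_{\ell+1}$ varies, which is precisely the original difficulty, and ``smoothness of $\sigma$'' does not address it (smoothness of $\sigma$ gives smooth dependence of $\sigma(N)(x)$, not of its inverse). The paper resolves this by invoking Lemma~\ref{L:classics}(\ref{I:DiffEmb}) rather than (\ref{I:EmbGr}): local smooth sections $\varphi_{\ell+1}\mapsto f_{\varphi_{\ell+1}}$ of the $\Diff_c(M)$-action on $\Emb_{S_{\ell+1}}(M)$ satisfy $f_{\varphi_{\ell+1}}\circ\varphi=\varphi_{\ell+1}$ for a fixed base embedding $\varphi$; together with a fixed smooth extension $\phi\colon V\to S_{\ell+1}$ of $\varphi^{-1}$ to an open neighborhood $V$ of $\varphi(S_{\ell+1})$ in $M$, one obtains $\varphi_{\ell+1}^{-1}\circ\varphi_i=\phi\circ f_{\varphi_{\ell+1}}^{-1}\circ\varphi_i$, a composition of one fixed map with globally defined, smoothly varying maps --- so the only inversion takes place in the Lie group $\Diff_c(M)$, where it is automatic. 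Your route can be salvaged by choosing $\sigma$ to arise from a tubular neighborhood of a fixed $N_0$, so that all the inverses $\sigma(N)^{-1}$ are restrictions to $N$ of a single fixed retraction defined on an open subset of $M$; but this amounts to the same device the paper uses.
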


\begin{proof}
Clearly, the map in \eqref{E:17} is bijective with inverse,
\begin{align*}
\pFlag_{\mathcal S''}(M)\times\pFlag_{\mathcal S'}(S_{\ell+1})&\to\pFlag_{\mathcal S}(M),\\
\bigl(\varphi_{\ell+1},\dotsc,\varphi_r;\tilde\varphi_1,\dotsc,\tilde\varphi_\ell\bigr)&\mapsto\bigl(\varphi_{\ell+1}\circ\tilde\varphi_1,\dotsc,\varphi_{\ell+1}\circ\tilde\varphi_\ell,\varphi_{\ell+1},\dotsc,\varphi_r\bigr).
\end{align*}
Smoothness of the inverse follows from the fact that this is the restriction of a smooth map $\prod_{i=\ell+1}^r\Emb_{S_i}(M)\times\prod_{i=1}^\ell\Emb_{S_i}(S_{\ell+1})\to\prod_{i=1}^r\Emb_{S_i}(M)$ given by the same formula and from Proposition~\ref{L1}(\ref{I:pFlag}).
To check smoothness of the map in \eqref{E:17}, we fix $\varphi\in\Emb_{S_{\ell+1}}(M)$.
Since the $\Diff_c(M)$ action on $\Emb_{S_{\ell+1}}(M)$ admits local smooth sections, see Lemma~\ref{L:classics}(\ref{I:DiffEmb}), there exists an open neighborhood $U$ of $\varphi$ in $\Emb_{S_{\ell+1}}(M)$ and a smooth map $f\colon U\to\Diff_c(M)$ such that $f_\varphi=\id$ and $f_{\varphi_{\ell+1}}\circ\varphi=\varphi_{\ell+1}$ for all $\varphi_{\ell+1}\in U$.
Moreover, we let $\phi\colon V\to S_{\ell+1}$ denote a smooth extension of $\varphi^{-1}\colon\varphi(S_{\ell+1})\to S_{\ell+1}$ to an open neighborhood $V$ of $\varphi(S_{\ell+1})$ in $M$.
Then the map in \eqref{E:17} may be expressed in the form
$$
\bigl(\varphi_1,\dotsc,\varphi_r\bigr)\mapsto\bigl(\varphi_{\ell+1},\dotsc,\varphi_r;\phi\circ f_{\varphi_{\ell+1}}^{-1}\circ\varphi_1,\dotsc,\phi\circ f_{\varphi_{\ell+1}}^{-1}\circ\varphi_\ell\bigr),
$$
provided $\varphi_{\ell+1}\in U$ and $\varphi_{\ell+1}(S_{\ell+1})\subseteq V$.
Note that the same formula provides a smooth extension, mapping an open subset in $\prod_{i=1}^r\Emb_{S_i}(M)$ into $\prod_{i=\ell+1}^r\Emb_{S_i}(M)\times\prod_{i=1}^\ell\Emb_{S_i}(S_{\ell+1})$.
Hence, using Proposition~\ref{L1}(\ref{I:pFlag}), we conclude that \eqref{E:17} is smooth.
This proves (\ref{I:pFlagppp}).

Using Proposition~\ref{L1}(\ref{I:pFlagFlag}) and \cite[Section~37.12]{KM}, one readily checks that \eqref{E:17} induces a diffeomorphism as indicated in \eqref{E:18}, whence (\ref{I:Flagppp}).
The statements in (\ref{I:diagppp}) are now obvious.
\end{proof}

\begin{remark}\label{R:pFlagtriv}
Iterating Proposition~\ref{smallest}(\ref{I:pFlagppp}) we obtain a canonical diffeomorphism:
\begin{align*}
\pFlag_{\mathcal S}(M)&=\Emb_{S_1}(S_2)\times\Emb_{S_2}(S_3)\times\cdots\times\Emb_{S_{r-1}}(S_r)\times\Emb_{S_r}(M)
\\
(\varphi_1,\dotsc,\varphi_r)&\mapsto\bigl(\varphi_2^{-1}\circ\varphi_1,\varphi_3^{-1}\circ\varphi_2,\dotsc,\varphi_r^{-1}\circ\varphi_{r-1},\varphi_r\bigr)
\end{align*}
Iterating Proposition~\ref{smallest}(\ref{I:Flagppp}) we see that the nonlinear flag manifold $\Flag_\mathcal S(M)$ may be regarded as a twisted product of the nonlinear Grassmannians $\Gr_{S_1}(S_2),\dotsc,\Gr_{S_{r-1}}(S_r)$ and $\Gr_{S_r}(M)$.
\end{remark}

\begin{remark}[Decorated nonlinear Grassmannians]
In the one limiting case, $\ell=r-1$, we have $\mathcal S'=(S_1,\dots,S_{r-1})$, $\mathcal S''=S_r$ and the commutative diagrams in \eqref{name} becomes:
$$
\xymatrix{
\pFlag_{\mathcal S}(M)\ar[rr]^-{\pFlag_{\mathcal S'}(S_r)}\ar[d]_{\Diff(\mathcal S)}&&\Emb_{S_r}(M)\ar[d]^{\Diff(S_r)}
\\
\Flag_{\mathcal S}(M)\ar[rr]^-{\Flag_{\mathcal S'}(S_r)}&&\Gr_{S_r}(M).
}
$$
The forgetful map \eqref{oino} becomes $(N_1,\dots,N_r)\mapsto N_r$.
This allows to interpret nonlinear flags as nonlinear Grassmannians decorated with an extra structure: the flag $(N_1,\dotsc,N_r)$ can be seen as a submanifold $N_r$ of $M$ decorated with a nonlinear flag $(N_1,\dotsc,N_{r-1})\in\Flag_{\mathcal S'}(N_r)$.
\end{remark}

\subsection{Nonlinear flag manifolds as homogeneous spaces}

Clearly, the action of $\Diff_c(M)$ on the frame bundle $\pFlag_{\mathcal S}(M)$ will in general not be locally transitive if $r>1$.
However, the action of $\Diff_c(M)$ on the flag manifold $\Flag_{\mathcal S}(M)$ is locally transitive.
More precisely, we will now show that (connected components of) $\Flag_{\mathcal S}(M)$ are homogeneous spaces of $\Diff_c(M)$.
In the subsequent section we will exhibit a reduction of structure groups for the frame bundle $\pFlag_{\mathcal S}(M)\to\Flag_{\mathcal S}(M)$ with a locally transitive $\Diff_c(M)$ action on its total space.

\begin{proposition}\label{no1}
For $\mathcal N=(N_1,\dotsc,N_r)\in\Flag_{\mathcal S}(M)$ the following hold true:
\begin{enumerate}[(a)]
\item\label{I:DiffFlag}
The $\Diff_c(M)$ action on $\Flag_{\mathcal S}(M)$ is smooth and admits local smooth sections.
In particular, this action is locally and infinitesimally transitive.
Moreover, $\Flag_{\mathcal S}(M)_{\mathcal N}$, the $\Diff_c(M)$ orbit through $\mathcal N$, consists of several connected components of $\Flag_{\mathcal S}(M)$.
\item\label{I:DiffMN}
The isotropy group, 
$$
\Diff_c(M;\mathcal N):=\Diff_c(M;N_1,\dotsc,N_r):=\left\{g\in\Diff_c(M)\middle|\forall i:g(N_i)=N_i\right\},
$$ 
is a splitting Lie subgroup in $\Diff_c(M)$ with Lie algebra 
$$
\mathfrak X_c(M;\mathcal N):=\mathfrak X_c(M;N_1,\dotsc,N_r):=\left\{X\in\mathfrak X_c(M)\middle|\forall i:X(N_i)\subseteq TN_i\right\}.
$$
\item\label{I:FlagMN}
The map provided by the action, $\Diff_c(M)\to\Flag_{\mathcal S}(M)_{\mathcal N}$, $f\mapsto\bigl(f(N_1),\dotsc,f(N_r)\bigr)$, is a smooth principal fiber bundle with structure group $\Diff_c(M;\mathcal N)$.
Hence, 
$$
\Flag_{\mathcal S}(M)_{\mathcal N}=\Diff_c(M)/\Diff_c(M;\mathcal N)
$$ 
may be regarded as a homogeneous space.
\end{enumerate}
\end{proposition}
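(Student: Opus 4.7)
My plan is to prove all three assertions simultaneously by induction on the flag length $r$, with the base case $r=1$ being Lemma~\ref{L:classics}(\ref{I:DiffGr}), since $\Flag_{(S_1)}(M)=\Gr_{S_1}(M)$. For the inductive step, I will combine a local section handling the top submanifold $N_r$ via Lemma~\ref{L:classics}(\ref{I:DiffGr}), the inductive hypothesis applied to the shorter flag $(N_1,\dotsc,N_{r-1})$ inside the compact manifold $N_r$, and Lemma~\ref{L:classics}(\ref{I:DiffDiff}) to lift diffeomorphisms of $N_r$ to compactly supported diffeomorphisms of $M$ that fix $N_r$ setwise.

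For part (\ref{I:DiffFlag}), I would first apply Lemma~\ref{L:classics}(\ref{I:DiffGr}) to obtain a neighborhood $U_r\subseteq\Gr_{S_r}(M)$ of $N_r$ and a smooth $s_r\colon U_r\to\Diff_c(M)$ with $s_r(N_r)=\id$ and $s_r(\tilde N_r)(N_r)=\tilde N_r$. For $\tilde{\mathcal N}=(\tilde N_1,\dotsc,\tilde N_r)$ sufficiently close to $\mathcal N$, the translated flag $\mathcal N^*:=\bigl(s_r(\tilde N_r)^{-1}(\tilde N_1),\dotsc,s_r(\tilde N_r)^{-1}(\tilde N_{r-1})\bigr)$ lies in $\Flag_{(S_1,\dotsc,S_{r-1})}(N_r)$ near $(N_1,\dotsc,N_{r-1})$. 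The inductive hypothesis then supplies a local smooth section $\sigma$ for the $\Diff(N_r)$ action there, and Lemma~\ref{L:classics}(\ref{I:DiffDiff}) applied with $\varphi=\id_{N_r}$ yields a smooth map $\rho$ defined near $\id_{N_r}$ with values in $\Diff_c(M;N_r)$ satisfying $\rho(g)|_{N_r}=g$. Setting
\[
s(\tilde{\mathcal N}):=s_r(\tilde N_r)\circ\rho(\sigma(\mathcal N^*))
\]
provides the desired local section: a direct check, using that $\rho(\sigma(\mathcal N^*))$ fixes $N_r$ and restricts to $\sigma(\mathcal N^*)$ on it, verifies $s(\tilde{\mathcal N})(N_i)=\tilde N_i$ for all $i$, hence local and infinitesimal transitivity.

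For part (\ref{I:DiffMN}), I would work in the standard chart $\Gamma_c(TM)\supseteq\mathcal U\to\Diff_c(M)$, $X\mapsto\alpha\circ X$, recalled from the proof of Lemma~\ref{L:classics}. The preparatory step is to choose the tubular neighborhood datum $\alpha$ compatibly with the whole nested flag, so that $X\in TN_i\Leftrightarrow(\pi(X),\alpha(X))\in N_i\times N_i$ holds for every $i$; I would construct such $\alpha$ by inductively selecting compatible tubular neighborhoods of $N_i$ in $N_{i+1}$ and gluing them to a tubular neighborhood of $N_r$ in $M$. In this adapted chart the subgroup $\Diff_c(M;\mathcal N)$ corresponds exactly to the linear subspace $\mathfrak X_c(M;\mathcal N)$. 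The base case then provides a complement of $\{X\in\Gamma_c(TM):X(N_r)\subseteq TN_r\}$ in $\Gamma_c(TM)$, and the inductive hypothesis a complement of $\mathfrak X(N_r;N_1,\dotsc,N_{r-1})$ in $\Gamma(TN_r)$; the latter lifts to a complementary subspace inside $\{X\in\Gamma_c(TM):X(N_r)\subseteq TN_r\}$ via any smooth splitting of the surjective restriction $X\mapsto X|_{N_r}$. Adding these two complements exhibits $\mathfrak X_c(M;\mathcal N)$ as a split subspace of $\Gamma_c(TM)$, yielding both the smooth structure of the subgroup and its Lie algebra.

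Part (\ref{I:FlagMN}) will then be essentially formal: by (\ref{I:DiffFlag}) the orbit map $\Diff_c(M)\to\Flag_{\mathcal S}(M)_{\mathcal N}$ is a smooth surjective submersion onto an open and closed submanifold, with fibers the cosets of the splitting Lie subgroup from (\ref{I:DiffMN}); composing a local smooth section of the action with the quotient projection produces the principal bundle charts exactly as in Lemma~\ref{L:classics}. The step I expect to be the main obstacle is the simultaneous compatibility of $\alpha$ with the entire flag in part (\ref{I:DiffMN}); once this adapted chart is in place, the linear identification of the isotropy with $\mathfrak X_c(M;\mathcal N)$ and the splitting argument proceed routinely.
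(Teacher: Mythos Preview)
Your argument for part~(\ref{I:DiffFlag}) is essentially identical to the paper's: both proceed by induction on $r$, translate the flag back into $N_r$ via a local section for $\Gr_{S_r}(M)$, invoke the inductive hypothesis inside $N_r$, and then lift via Lemma~\ref{L:classics}(\ref{I:DiffDiff}). Part~(\ref{I:FlagMN}) is likewise handled the same way in both.

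The genuine difference lies in part~(\ref{I:DiffMN}). You propose to build a single tubular-neighborhood map $\alpha$ adapted simultaneously to the entire flag, so that in the resulting chart the isotropy group corresponds to the linear subspace $\mathfrak X_c(M;\mathcal N)$, and then exhibit a complement by lifting one from $\Gamma(TN_r)$. This works, but as you note, manufacturing such an $\alpha$ is the technical crux and requires an inductive construction of compatible sprays or metrics. The paper bypasses this entirely: it observes that the restriction map $p\colon\Diff_c(M;N_r)\to\Diff(N_r)$ is a smooth principal bundle onto its image by Lemma~\ref{L:classics}(\ref{I:DiffDiff}), that $\Diff(N_r;N_1,\dotsc,N_{r-1})$ is a splitting Lie subgroup of $\Diff(N_r)$ by induction, and that $\Diff_c(M;\mathcal N)=p^{-1}\bigl(\Diff(N_r;N_1,\dotsc,N_{r-1})\bigr)$. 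Since preimages of splitting submanifolds under locally trivial bundle projections are again splitting submanifolds, the conclusion follows with no need for a flag-adapted chart. Your approach has the merit of producing an explicit submanifold chart for $\Diff_c(M;\mathcal N)$ in one stroke; the paper's argument is shorter and avoids the geometric construction altogether.
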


\begin{proof}
To show (\ref{I:DiffFlag}), we proceed by induction on $r$.
Since the $\Diff_c(M)$ action on $\Gr_{S_r}(M)$ admits local sections, see Lemma~\ref{L:classics}(\ref{I:DiffGr}), there exists an open neighborhood $U$ of $N_r$ in $\Gr_{S_r}(M)$ and a smooth map $U\to\Diff_c(M)$, $N_r'\mapsto f_{N_r'}$, such that $f_{N_r}=\id$ and 
\begin{equation}\label{E:x1}
f_{N_r'}(N_r)=N_r',
\end{equation}
for all $N_r'\in U$.
Using Proposition~\ref{L1}(\ref{I:Flag}) and Remark~\ref{R:GrSLM}, we see that $\Flag_{S_1,\dotsc,S_{r-1}}(N_r)$ is a splitting smooth submanifold in $\prod_{i=1}^{r-1}\Gr_{S_i}(M)$.
Hence, we obtain a smooth map
\begin{align}\label{E:x2}
q\colon\bigl\{(N_1',\dotsc,N_r')\in\Flag_{\mathcal S}(M):N_r'\in U\bigr\}&\to\Flag_{S_1,\dotsc,S_{r-1}}(N_r),\notag\\
q(N_1',\dotsc,N_r')&:=\bigl(f_{N_r'}^{-1}(N_1'),\dotsc,f_{N_r'}^{-1}(N_{r-1}')\bigr).
\end{align}
Clearly, $q_{N_1,\dotsc,N_r}=(N_1,\dotsc,N_{r-1})$.

By the induction hypothesis, the $\Diff(N_r)$ action on $\Flag_{S_1,\dotsc,S_{r-1}}(N_r)$
admits local smooth sections. Thus
there exists an open neighborhood $V$ of $(N_1,\dotsc,N_{r-1})$ in $\Flag_{S_1,\dotsc,S_{r-1}}(N_r)$ and a smooth map $g\colon V\to\Diff(N_r)$, $(N_1',\dotsc,N_{r-1}')\mapsto g_{N_1',\dotsc,N_{r-1}'}$, such that $g_{N_1,\dotsc,N_{r-1}}=\id$ and $g_{N_1',\dotsc,N_{r-1}'}(N_1,\dotsc,N_{r-1})=(N_1',\dotsc,N_{r-1}')$ for all $(N_1',\dotsc,N_{r-1}')\in V$.
Moreover, in view of Lemma~\ref{L:classics}(\ref{I:DiffDiff}), there exists an open neighborhood $W$ of the identity in $\Diff(N_r)$ and a smooth map $h\colon W\to\Diff_c(M)$, such that $h(\id)=\id$ and $h(g)|_{N_r}=g$, for all $g\in W$.
Hence, $\tilde V:=g^{-1}(W)$ is an open neighborhood of $(N_1,\dotsc,N_{r-1})$ in $\Flag_{S_1,\dotsc,S_{r-1}}(N_r)$ and $\tilde g\colon\tilde V\to\Diff_c(M)$, $\tilde g:=h\circ g$, is a smooth map such that $\tilde g_{N_1,\dotsc,N_{r-1}}=\id$ and 
\begin{equation}\label{E:x3}
\tilde g_{N_1',\dotsc,N_{r-1}'}(N_1,\dotsc,N_{r-1},N_r)=(N_1',\dotsc,N_{r-1}',N_r),
\end{equation}
for all $(N_1',\dotsc,N_{r-1}')\in\tilde V$.

We obtain an open neighborhood $\tilde U:=q^{-1}(\tilde V)$ of $(N_1,\dotsc,N_r)$ in $\Flag_{\mathcal S}(M)$ and a smooth map
\[
k\colon\tilde U\to\Diff_c(M),\qquad(N_1,\dotsc,N_r')\mapsto k_{N_1',\dotsc,N_r'}:=f_{N_r'}\circ\tilde g_{q(N_1',\dotsc,N_r')}.
\]
Clearly, $k_{N_1,\dotsc,N_r}=\id$.
Furthermore, using the equations in \eqref{E:x1}, \eqref{E:x2} and \eqref{E:x3} one readily verifies that $k_{N_1',\dotsc,N_r'}(N_1,\dotsc,N_r)=(N_1',\dotsc,N_r')$, for all $(N_1',\dotsc,N_r')\in\tilde U$.
Hence, this $k$ is a local smooth section for the $\Diff_c(M)$ action on $\Flag_{\mathcal S}(M)$.

To show (\ref{I:DiffMN}) we proceed, again, by induction on $r$.
It is well known that $\Diff_c(M;N_r)$ is a splitting Lie subgroup in $\Diff_c(M)$, see Lemma~\ref{L:classics}(\ref{I:DiffGr}).
Moreover, restriction provides a map $p\colon\Diff_c(M;N_r)\to\Diff(N_r)$ which is a smooth principal fiber bundle after disregarding the connected components of $\Diff(N_r)$ which are not in the image, see Lemma~\ref{L:classics}(\ref{I:DiffDiff}).
By induction, $\Diff(N_r;N_1,\dotsc,N_{r-1})$ is a splitting Lie subgroup in $\Diff(N_r)$.
Using the obvious relation 
\[
\Diff_c(M;N_1,\dotsc,N_r)=p^{-1}\bigl(\Diff(N_r;N_1,\dotsc,N_{r-1})\bigr),
\] 
we see that $\Diff_c(M;N_1,\dotsc,N_r)$ is a splitting Lie subgroup in $\Diff_c(M;N_r)$.
Since the latter is a splitting Lie subgroup in $\Diff_c(M)$, we conclude that $\Diff_c(M;N_1,\dotsc,N_r)$ is a splitting Lie subgroup in $\Diff_c(M)$.

The statement in (\ref{I:FlagMN}) is an immediate consequence of (\ref{I:DiffFlag}) and (\ref{I:DiffMN}).
\end{proof}

\subsection{A reduction of structure groups}\label{2.2}

Consider a sequence of embeddings 
\begin{equation}\label{star}
S_1\xrightarrow{\iota_1}S_2\xrightarrow{\iota_2}S_3\to\cdots\to S_{r-1}\xrightarrow{\iota_{r-1}}S_r
\end{equation} 
and put $\iota:=(\iota_1,\dotsc,\iota_{r-1})$.

Denote the subset of all frames in $\pFlag_{\mathcal S}(M)$ which are compatible with this sequence by
$$
\pFlag_{\mathcal S,\iota}(M):=\left\{(\varphi_1,\dotsc,\varphi_r)\in\prod_{i=1}^r\Emb_{S_i}(M)\middle|\forall i:\varphi_{i+1}\circ\iota_i=\varphi_i\right\}.
$$
Projecting out the last component provides a canonical identification
\begin{equation}\label{fe}
\pFlag_{\mathcal S,\iota}(M)=\Emb_{S_r}(M),\quad(\varphi_1,\dotsc,\varphi_r)\mapsto\varphi_r,
\end{equation}
the other embeddings can be recovered from $\varphi_r$ via $\varphi_i=\varphi_r\circ\iota_{r-1}\circ\dotsc\circ\iota_i$.

Moreover, let
\begin{equation}\label{grup}
\Diff(\mathcal S;\iota):=\left\{(g_1,\dotsc,g_r)\in\prod_{i=1}^r\Diff(S_i)\middle|\forall i:g_{i+1}\circ\iota_i=\iota_i\circ g_i\right\}
\end{equation}
denote the subgroup of all diffeomorphisms in $\Diff(\mathcal S)$ which are compatible with the sequence in \eqref{star}.
Clearly, $\pFlag_{\mathcal S,\iota}(M)$ is invariant under the action of $\Diff(\mathcal S;\iota)$.
Projecting out the last component, we obtain a canonical identification
\begin{equation}\label{E:DiffSrSigma}
\Diff(\mathcal S;\iota)=\Diff(S_r;\Sigma),\quad(g_1,\dotsc,g_r)\mapsto g_r,
\end{equation}
with the isotropy group of $\Sigma:=(\Sigma_1,\dotsc,\Sigma_{r-1})\in\Flag_{S_1,\dotsc,S_{r-1}}(S_r)$, where
\begin{equation}\label{E:Sigmai}
\Sigma_i:=(\iota_{r-1}\circ\cdots\circ\iota_i)(S_i).
\end{equation}
The other diffeomorphisms can be recovered from $g_r$ via $g_i=(\iota_{r-1}\circ\cdots\circ\iota_i)^{-1}\circ g_r\circ(\iota_{r-1}\circ\cdots\iota_i)$.

Finally, let $\Flag_{\mathcal S,\iota}(M)$ denote the image of $\pFlag_{\mathcal S,\iota}(M)$ under the map $\pFlag_{\mathcal S}(M)\to\Flag_{\mathcal S}(M)$ in \eqref{frames}.
Using the canonical identification \eqref{fe}, this can equivalently be characterized by
$$
\Flag_{\mathcal S,\iota}(M)=\left\{(N_1,\dotsc,N_r)\in\prod_{i=1}^r\Gr_{S_i}(M)\middle|\exists\varphi_r\in\Emb_{S_r}(M):\forall i:N_i=\varphi_r(\Sigma_i)\right\}.
$$ 
This will be referred to as the space of \emph{nonlinear flags of type $(\mathcal S,\iota)$ in $M$.}

\begin{proposition}\label{L2}
With this notation the following hold true:
\begin{enumerate}[(a)]
\item\label{I:Diffiota}
$\Diff(\mathcal S;\iota)$ is a splitting Lie subgroup of $\Diff(\mathcal S)$ with Lie algebra
\begin{equation}\label{88}
\mathfrak X(\mathcal S;\iota)=\left\{(Z_1,\dotsc,Z_r)\in\prod_{i=1}^r\mathfrak X(S_i)\middle|\forall i:Z_{i+1}\circ\iota_i=T\iota_i\circ Z_i\right\}.
\end{equation}
Moreover, the canonical identification in \eqref{E:DiffSrSigma} is a diffeomorphism of Lie groups.
\item\label{I:pFlagiota}
$\pFlag_{\mathcal S,\iota}(M)$ is a splitting smooth submanifold of $\pFlag_{\mathcal S}(M)$ with tangent space
$$
T_\Phi\pFlag_{\mathcal S,\iota}(M)=\left\{(X_1,\dotsc,X_r)\in\prod_{i=1}^r\Gamma(\varphi_i^*TM)\middle|\forall i:X_{i+1}\circ\iota_i=X_i\right\}
$$
at $\Phi=(\varphi_1,\dotsc,\varphi_r)\in\pFlag_{\mathcal S,\iota}(M)$.
Moreover, the canonical identification in \eqref{fe} is a diffeomorphism which is equivariant over the isomorphism of groups in \eqref{E:DiffSrSigma}.
\item\label{I:Flagiota}
$\Flag_{\mathcal S,\iota}(M)$ is a $\Diff(M)$ invariant open and closed subset of $\Flag_{\mathcal S}(M)$. 
\item\label{I:pFlagFlagiota}
The restriction of the canonical map in \eqref{frames}, 
\begin{equation}\label{framesi}
\pFlag_{\mathcal S,\iota}(M)\to\Flag_{\mathcal S,\iota}(M),
\quad(\varphi_1,\dotsc,\varphi_r)\mapsto\bigl(\varphi_1(S_1),\dotsc,\varphi_r(S_r)\bigr),
\end{equation} 
is a $\Diff(M)$ equivariant smooth principal fiber bundle with structure group $\Diff(\mathcal S;\iota)$.
\item\label{I:diagiota}
These maps fit into the following $\Diff(M)$ equivariant commutative diagram
$$
\xymatrix{
\Emb_{S_r}(M)\ar@/_3ex/[dr]_(.3){\Diff(S_r;\Sigma)}\ar@{=}[r]&\pFlag_{\mathcal S,\iota}(M)\ar[d]^-{\Diff(\mathcal S;\iota)}\ar@{^(->}[r]&\pFlag_{\mathcal S}(M)\ar[d]^{\Diff(\mathcal S)}
\\
&\Flag_{\mathcal S,\iota}(M)\ar@{^(->}[r]&\Flag_{\mathcal S}(M),
}
$$
where the arrows indicating principal bundles are labeled with their structure groups.
Hence, this may be regarded as a reduction of the structure group along the inclusion $\Diff(S_r;\Sigma)=\Diff(\mathcal S;\iota)\subseteq\Diff(\mathcal S)$.
\end{enumerate}
\end{proposition}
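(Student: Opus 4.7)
The strategy is to identify both $\Diff(\mathcal S;\iota)$ and $\pFlag_{\mathcal S,\iota}(M)$ as graphs of smooth maps inside the ambient spaces $\Diff(\mathcal S)$ and $\pFlag_{\mathcal S}(M)$, respectively. On one hand, I would apply Proposition~\ref{no1}(\ref{I:DiffMN}) to the closed manifold $S_r$ with the model flag $\Sigma\in\Flag_{S_1,\dotsc,S_{r-1}}(S_r)$ (using $\Diff_c(S_r)=\Diff(S_r)$); on the other hand, I would exploit the canonical trivialization of Remark~\ref{R:pFlagtriv}. Since the graph of a smooth map between Fr\'echet manifolds is a splitting submanifold of the product, the splitting assertions reduce to exhibiting the relevant smooth maps.

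For part (\ref{I:Diffiota}), Proposition~\ref{no1}(\ref{I:DiffMN}) gives that $\Diff(S_r;\Sigma)$ is a splitting Lie subgroup of $\Diff(S_r)$ with Lie algebra $\mathfrak X(S_r;\Sigma)$. Define the smooth map
\[
\rho\colon\Diff(S_r;\Sigma)\to\prod_{i=1}^{r-1}\Diff(S_i),\qquad g_r\mapsto(g_1,\dotsc,g_{r-1}),
\]
by $g_i:=(\iota_{r-1}\circ\cdots\circ\iota_i)^{-1}\circ g_r|_{\Sigma_i}\circ(\iota_{r-1}\circ\cdots\circ\iota_i)$. Then $\Diff(\mathcal S;\iota)$ is precisely the graph of $\rho$ inside the splitting submanifold $\prod_{i<r}\Diff(S_i)\times\Diff(S_r;\Sigma)\subseteq\Diff(\mathcal S)$, so is itself splitting; projection to the last factor realizes \eqref{E:DiffSrSigma} as a diffeomorphism of Lie groups; and differentiating $\rho$ at the identity produces the Lie algebra description in \eqref{88}. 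For (\ref{I:pFlagiota}), I would run a parallel argument using
\[
\pFlag_{\mathcal S}(M)\cong\prod_{i=1}^{r-1}\Emb_{S_i}(S_{i+1})\times\Emb_{S_r}(M),\quad(\varphi_1,\dotsc,\varphi_r)\mapsto(\varphi_2^{-1}\circ\varphi_1,\dotsc,\varphi_r^{-1}\circ\varphi_{r-1},\varphi_r),
\]
from Remark~\ref{R:pFlagtriv}. The relations $\varphi_{i+1}\circ\iota_i=\varphi_i$ become $\varphi_{i+1}^{-1}\circ\varphi_i=\iota_i$, so $\pFlag_{\mathcal S,\iota}(M)$ corresponds to the manifestly splitting submanifold $\{(\iota_1,\dotsc,\iota_{r-1})\}\times\Emb_{S_r}(M)$. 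Projection to the last factor identifies \eqref{fe} as a diffeomorphism, equivariant over \eqref{E:DiffSrSigma}, and the tangent space is read off by differentiating the graph embedding $\varphi_r\mapsto(\varphi_r\circ\iota_{r-1}\circ\cdots\circ\iota_i)_{i=1}^r$.

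For part (\ref{I:Flagiota}), $\Diff(M)$-invariance is immediate from post-composition preserving $\varphi_{i+1}\circ\iota_i=\varphi_i$. In particular $\Flag_{\mathcal S,\iota}(M)$ is $\Diff_c(M)$-invariant, and since the $\Diff_c(M)$-orbits in $\Flag_{\mathcal S}(M)$ each consist of several connected components by Proposition~\ref{no1}(\ref{I:DiffFlag}), the subset $\Flag_{\mathcal S,\iota}(M)$ is open and closed. For part (\ref{I:pFlagFlagiota}), I would first verify directly that $\Diff(\mathcal S;\iota)$ preserves $\pFlag_{\mathcal S,\iota}(M)$ and acts freely and transitively on the fibers of \eqref{framesi}. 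Local smooth sections are then obtained by fixing $\varphi_r\in\Emb_{S_r}(M)$ over $\mathcal N\in\Flag_{\mathcal S,\iota}(M)$ and composing with a local smooth section $\mathcal N'\mapsto f_{\mathcal N'}\in\Diff_c(M)$ from Proposition~\ref{no1}(\ref{I:DiffFlag}), yielding $\mathcal N'\mapsto f_{\mathcal N'}\circ\varphi_r\in\Emb_{S_r}(M)$. The $\Diff(M)$-equivariant commutative diagram in (\ref{I:diagiota}) assembles from the preceding pieces.

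The one conceptual step that does work is noticing that, under the canonical trivialization of Remark~\ref{R:pFlagtriv}, the defining condition of $\pFlag_{\mathcal S,\iota}(M)$ collapses to a trivial slice, and correspondingly that $\Diff(\mathcal S;\iota)$ is a graph over the Lie subgroup $\Diff(S_r;\Sigma)$ furnished by Proposition~\ref{no1}. Once this picture is in place, all splitting-submanifold assertions and the remaining statements follow formally.
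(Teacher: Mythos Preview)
Your proposal is correct and follows essentially the same route as the paper: graph argument over $\Diff(S_r;\Sigma)$ for (\ref{I:Diffiota}), the trivialization of Remark~\ref{R:pFlagtriv} for (\ref{I:pFlagiota}), Proposition~\ref{no1}(\ref{I:DiffFlag}) for (\ref{I:Flagiota}), and local sections via Proposition~\ref{no1}(\ref{I:DiffFlag}) for (\ref{I:pFlagFlagiota}). The only point the paper makes explicit that you gloss over is the smoothness of $\rho$, which it justifies by invoking Lemma~\ref{L:classics}(\ref{I:DiffDiff}) to see that the restriction $g_r\mapsto g_r|_{\Sigma_i}$ is smooth.
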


\begin{proof}
Recall from Proposition~\ref{no1}(\ref{I:DiffMN}) that $\Diff(S_r;\Sigma)$ is a splitting Lie subgroup in $\Diff(S_r)$.
Using Lemma~\ref{L:classics}(\ref{I:DiffDiff}), we see that the map 
$$
\Diff(S_r;\Sigma)\to\Diff(S_i),\quad g_r\mapsto g_i:=(\iota_{r-1}\circ\cdots\circ\iota_i)^{-1}\circ g_r\circ(\iota_{r-1}\circ\cdots\iota_i),
$$
is smooth.
Hence, $\Diff(\mathcal S;\iota)$ is the graph of the smooth map 
$$
\Diff(S_r)\supseteq\Diff(S_r;\Sigma)\to\prod_{i=1}^{r-1}\Diff(S_i),\quad g_r\mapsto(g_1,\dotsc,g_{r-1}).
$$ 
We conclude that $\Diff(\mathcal S;\iota)$ is a splitting smooth submanifold in $\Diff(\mathcal S)$ and that the isomorphism of groups in \eqref{E:DiffSrSigma} is a diffeomorphism.
This shows (\ref{I:Diffiota}).

To see (\ref{I:pFlagiota}), it suffices to observe that the diffeomorphism in Remark~\ref{R:pFlagtriv} maps $\pFlag_{\mathcal S,\iota}(M)$ onto the subset
$\{\iota_1\}\times\cdots\times\{\iota_{r-1}\}\times\Emb_{S_r}(M)$.

The statement in (\ref{I:Flagiota}) is an immediate consequence of Proposition~\ref{no1}(\ref{I:DiffFlag}).

To see (\ref{I:pFlagFlagiota}), it remains to construct local sections of the map in \eqref{framesi}.
Given $\mathcal N=(N_1,\dotsc,N_r)$ in $\Flag_{\mathcal S,\iota}(M)$, there exists $(\varphi_1,\dotsc,\varphi_r)$ in $\pFlag_{\mathcal S,\iota}(M)$ such that $\varphi_i(S_i)=N_i$.
Using Proposition~\ref{no1}(\ref{I:DiffFlag}), we find an open neighborhood $U$ of $\mathcal N$ in $\Flag_{\mathcal S,\iota}(M)$ and a smooth map $f\colon U\to\Diff_c(M)$, $\mathcal N'\mapsto f_{\mathcal N'}$, such that $f_{\mathcal N}=\id$ and $f_{\mathcal N'}(\mathcal N)=\mathcal N'$ for all $\mathcal N'\in U$.
Hence, 
$$
U\to\pFlag_{\mathcal S,\iota}(M),\quad\mathcal N'\mapsto\bigl(f_{\mathcal N'}\circ\varphi_1,\dotsc,f_{\mathcal N'}\circ\varphi_r\bigr),
$$
is a local smooth section of \eqref{framesi}, mapping $\mathcal N$ to $(\varphi_1,\dotsc,\varphi_r)$.

The statements in (\ref{I:diagiota}) are now obvious.
\end{proof}

\begin{remark}
As in Proposition~\ref{smallest}, we split $\mathcal S$ into $\mathcal S'=(S_1,\dots,S_{\ell})$ and $\mathcal S''=(S_{\ell+1},\dotsc,S_r)$, with $\iota':=(\iota_1,\dotsc,\iota_{\ell-1})$ and $\iota'':=(\iota_{\ell+1},\dotsc,\iota_{r-1})$.
Moreover, we consider the flags $\Sigma:=(\Sigma_1,\dotsc,\Sigma_{r-1})$ and $\Sigma'':=(\Sigma_{\ell+1},\dotsc,\Sigma_{r-1})$ in $S_r$ with $\Sigma_i$ as in \eqref{E:Sigmai}.
As in the proof of Proposition~\ref{no1}(\ref{I:DiffMN}) one can show that the canonical homomorphism 
\[
\Diff(S_r;\Sigma)=\Diff(\mathcal S;\iota)\to\Diff(\mathcal S'';\iota'')=\Diff(S_r;\Sigma'')
\] 
is the embedding of a splitting Lie subgroup, see also Proposition~\ref{L2}(\ref{I:Diffiota}).
The canonical map $\pFlag_{\mathcal S,\iota}(M)\to\pFlag_{\mathcal S'',\iota''}(M)$ is a diffeomorphism which is equivariant over the latter homomorphism in view of Proposition~\ref{L2}(\ref{I:pFlagiota}).
The canonical map $\Flag_{\mathcal S,\iota}(M)\to\Flag_{\mathcal S'',\iota''}(M)$ that forgets the first $\ell$ submanifolds of a nonlinear flag is a smooth fiber bundle which is canonically isomorphic to the associated bundle $\pFlag_{\mathcal S'',\iota''}(M)\times_{\Diff(\mathcal S'',\iota'')}\mathcal F$, where 
\[
\mathcal F=\frac{\Diff(S_r;\Sigma'')}{\Diff(S_r;\Sigma)}=\frac{\Diff(\mathcal S'';\iota'')}{\Diff(\mathcal S;\iota)}
\] 
denotes the open and closed orbit of the nonlinear flag $\Sigma':=(\Sigma_1',\dotsc,\Sigma_\ell')$ in $S_{\ell+1}$, with $\Sigma_i'=(\iota_\ell\circ\cdots\circ\iota_i)(S_i)$, under the action of $\Diff(\mathcal S'';\iota'')$ on $\Flag_{\mathcal S'}(S_{\ell+1})$ through its $(\ell+1)$-th component.
Hence, $\mathcal F$ consists of several connected components of the $\Diff(S_{\ell+1})$ orbit $\Flag_{\mathcal S'}(S_{\ell+1})_{\Sigma'}$.
We summarize these observations in the following $\Diff(M)$ equivariant commutative diagram
\begin{equation}\label{nam}
\vcenter{
\xymatrix{
\Emb_{S_r}(M)\ar@{=}[r]\ar@/_3ex/[dr]_(.3){\Diff(S_r;\Sigma)}
&\pFlag_{\mathcal S,\iota}(M)\ar@{=}[rrr]\ar[d]_-{\Diff(\mathcal S,\iota)}
&&&\pFlag_{\mathcal S'',\iota''}(M)\ar[d]^-{\Diff(\mathcal S'',\iota'')}\ar@{=}[r]
&\Emb_{S_r}(M)\ar@/^3ex/[dl]^(.3){\Diff(S_r;\Sigma'')}
\\
&\Flag_{\mathcal S,\iota}(M)\ar[rrr]^-{\mathcal F}&&&\Flag_{\mathcal S'',\iota''}(M
}}
\end{equation}
where each arrow is labeled with its typical fiber or structure group, respectively.
\end{remark}

\subsection{Tautological bundles}\label{SS:taut}

Recall the tautological bundle over the nonlinear Grassmannian, 
\[
\mathcal T:=\{(N,x)\in\Gr_S(M)\times M:x\in N\},
\]
a splitting submanifold of $\Gr_S(M)\times M$.
The projection on the first factor $p:\T\to\Gr_S(M)$ is a bundle with typical fiber $S$, called the \emph{tautological bundle}.
It is canonically diffeomorphic to the associated bundle 
\[
\Emb_S(M)\x_{\Diff(S)}S\to \Gr_S(M)
\]
via the diffeomorphism $[\ph,x]\leftrightarrow(\ph(S),\ph(x))$.
This can be used to show that the pullback of $\mathcal T$ along the principal bundle projection $\pi\colon\Emb_S(M)\to\Gr_S(M)$ is canonically trivial, i.e., $\pi^*\mathcal T=\Emb_S(M)\times S$.
Indeed, the principal bundle $\pi^*\Emb_S(M)$ is trivial since it admits a canonical section induced by $\pi$.

Over $\Gr_S(M)_N$, the $\Diff_c(M)$ orbit of $N\in\Gr_S(M)$, the restriction of the tautological bundle is canonically diffeomorphic to the associated bundle
\[
\Diff_c(M)\x_{\Diff_c(M;N)}N\to\Gr_S(M)_N
\]
via the diffeomorphism $[f,x]\leftrightarrow(f(N),f(x))$.
All these facts appear to be well known folklore.
More general results for flag manifolds will be formulated and proved below, see Proposition~\ref{P:taut}.

Tautological bundles will be used in Section~\ref{S:oFlag} to describe transgression of differential forms.
In \cite{DJNV} they are used for the transgression of differential characters to nonlinear Grassmannians.

Over the manifold $\Flag_{\mathcal S}(M)$ of nonlinear flags we have a nested sequence of tautological bundles with typical fibers $S_1,\dotsc,S_r$.
The proof we will present below uses the description of the nonlinear flag manifold as a homogeneous space in Proposition~\ref{no1}.

\begin{proposition}[Tautological bundles]\label{P:taut}
For $1\leq i\leq r$ consider
\begin{equation*}
\mathcal T_i:=\{(N_1,\dotsc,N_r;x)\in\Flag_{\mathcal S}(M)\times M:x\in N_i\}.
\end{equation*}
Then the following hold true:
\begin{enumerate}[(a)]
\item\label{I:taut:nest} 
$\mathcal T_1\subseteq\mathcal T_2\subseteq\cdots\subseteq\mathcal T_r\subseteq\Flag_{\mathcal S}(M)\times M$ is a sequence of splitting smooth submanifolds.
\item\label{I:taut:TiT} 
The canonical projection $\mathcal T_i\to\Flag_{\mathcal S}(M)$ is a smooth fiber bundle with typical fiber $S_i$ which is canonically diffeomorphic to the pullback of the tautological bundle over $\Gr_{S_i}(M)$ along the map $\Flag_{\mathcal S}(M)\to\Gr_{S_i}(M)$, $(N_1,\dotsc,N_r)\mapsto N_i$.
\item\label{I:taut:DiffS}
We have a canonical diffeomorphism of fiber bundles over $\Flag_{\mathcal S}(M)$,
\[
\pFlag_{\mathcal S}(M)\times_{\Diff(\mathcal S)}S_i=\mathcal T_i,\quad[(\varphi_1,\dotsc,\varphi_r),x]\leftrightarrow\bigl(\varphi_1(S_1),\dotsc,\varphi_r(S_r);\varphi_i(x)\bigr),
\]
where the left hand side denotes the bundle associated \cite[Section~37.12]{KM} to the principal bundle $\pFlag_{\mathcal S}(M)\to\Flag_{\mathcal S}(M)$ and the canonical action of the structure group $\Diff(\mathcal S)$ on $S_i$ via its $i$-th component.
\item\label{I:taut:iota}
For $\iota=(\io_1,\dotsc,\io_{r-1})$ as in Section~\ref{2.2} and $1\leq i\leq r$ we have a canonical diffeomorphism of fiber bundles over $\Flag_{\mathcal S,\iota}(M)$, 
\[
\pFlag_{\mathcal S,\iota}(M)\times_{\Diff(\mathcal S;\iota)}S_i=\mathcal T_i|_{\Flag_{\mathcal S,\iota}(M)},\quad[(\varphi_1,\dotsc,\varphi_r),x]\leftrightarrow\bigl(\varphi_1(S_1),\dotsc,\varphi_r(S_r);\varphi_i(x)\bigr),
\]
where the left hand side denotes the bundle associated to the principal bundle $\pFlag_{\mathcal S,\iota}(M)\to\Flag_{\mathcal S,\iota}(M)$ and the canonical action of its structure group $\Diff(\mathcal S;\iota)$ on $S_i$.
\item\label{I:taut:DiffM}
For $\mathcal N=(N_1,\dotsc,N_r)\in\Flag_{\mathcal S}(M)$ we have a canonical diffeomorphism of fiber bundles over $\Flag_{\mathcal S}(M)_{\mathcal N}$,
\begin{equation}\label{E:taut}
\Diff_c(M)\times_{\Diff_c(M;\mathcal N)}N_i=\mathcal T_i|_{\Flag_{\mathcal S}(M)_{\mathcal N}},\quad[f,x]\leftrightarrow(f(\mathcal N),f(x)),
\end{equation}
where the left hand side denotes the bundle associated to the principal bundle $\Diff_c(M)\to\Flag_{\mathcal S}(M)_{\mathcal N}$ and the canonical action of its structure group $\Diff_c(M;\mathcal N)$ on $N_i$. 
\end{enumerate}
\end{proposition}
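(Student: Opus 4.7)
The plan is to take Proposition~\ref{no1} as the key input and to prove (\ref{I:taut:DiffM}) first, since the other assertions can be read off from it by standard associated-bundle arguments. All statements are local in nature: since the $\Diff_c(M)$ orbits $\Flag_{\mathcal S}(M)_{\mathcal N}$ are open in $\Flag_{\mathcal S}(M)$ by Proposition~\ref{no1}(\ref{I:DiffFlag}), it suffices to work on one orbit at a time and to construct local trivializations in the vicinity of a chosen flag $\mathcal N=(N_1,\dotsc,N_r)$ and a chosen point $x\in N_i$.

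For (\ref{I:taut:DiffM}) I would consider the smooth map $\Diff_c(M)\times N_i\to\Flag_{\mathcal S}(M)_{\mathcal N}\times M$, $(f,y)\mapsto(f(\mathcal N),f(y))$, whose image lies in $\mathcal T_i|_{\Flag_{\mathcal S}(M)_{\mathcal N}}$. Every $g\in\Diff_c(M;\mathcal N)$ restricts to a diffeomorphism of $N_i$, so the map is invariant under the right action $(f,y)\cdot g=(f\circ g,g^{-1}(y))$ and descends to a smooth map on the associated bundle. It is bijective because, given $(\mathcal N',x')$, one picks any $f\in\Diff_c(M)$ with $f(\mathcal N)=\mathcal N'$, and $[f,f^{-1}(x')]$ is the unique preimage. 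By Proposition~\ref{no1}(\ref{I:FlagMN}) the orbit map admits local smooth sections $k\colon\tilde U\to\Diff_c(M)$ with $k_{\mathcal N'}(\mathcal N)=\mathcal N'$, and $(\mathcal N',x')\mapsto[k_{\mathcal N'},k_{\mathcal N'}^{-1}(x')]$ is a smooth local inverse.

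Items (\ref{I:taut:nest}) and (\ref{I:taut:TiT}) then follow. The section $k$ converts the associated-bundle description into an explicit local trivialization $\tilde U\times N_i\to\mathcal T_i|_{\tilde U}$, $(\mathcal N',y)\mapsto(\mathcal N',k_{\mathcal N'}(y))$, so that $\mathcal T_i\to\Flag_{\mathcal S}(M)$ is a locally trivial smooth fiber bundle with typical fiber $S_i$. Composing with a submanifold chart for $N_i\subseteq M$ centered at $x$ produces a splitting submanifold chart for $\mathcal T_i\subseteq\Flag_{\mathcal S}(M)\times M$; the nesting $\mathcal T_i\subseteq\mathcal T_{i+1}$ is built into the definitions, and the identification of $\mathcal T_i$ with the pullback of the tautological bundle over $\Gr_{S_i}(M)$ along the forgetful map $(N_1,\dotsc,N_r)\mapsto N_i$ is tautological.

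For (\ref{I:taut:DiffS}), the formula $(\Phi,y)\mapsto\bigl(\varphi_1(S_1),\dotsc,\varphi_r(S_r);\varphi_i(y)\bigr)$ yields a smooth map $\pFlag_{\mathcal S}(M)\times S_i\to\mathcal T_i$ which is invariant under the right $\Diff(\mathcal S)$ action, since $(\varphi_i\circ g_i)(g_i^{-1}(y))=\varphi_i(y)$, and hence descends to a smooth map from the associated bundle. Bijectivity is clear, and smoothness of the inverse follows from the local smooth sections of the principal bundle $\pFlag_{\mathcal S}(M)\to\Flag_{\mathcal S}(M)$ provided by Proposition~\ref{L1}(\ref{I:pFlagFlag}). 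Item (\ref{I:taut:iota}) is the restriction of this construction along $\pFlag_{\mathcal S,\iota}(M)\hookrightarrow\pFlag_{\mathcal S}(M)$, combined with Proposition~\ref{L2}(\ref{I:pFlagFlagiota}). The only mildly delicate step throughout is the smoothness of the inverses in (\ref{I:taut:DiffS})--(\ref{I:taut:DiffM}), which in each case reduces to a careful application of the local smooth sections already produced in Propositions~\ref{L1}, \ref{L2}, and \ref{no1}; after that, the remaining arguments are essentially diagram chases through the associated-bundle construction.
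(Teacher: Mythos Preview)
Your proposal is correct and follows essentially the same strategy as the paper: prove (\ref{I:taut:DiffM}) first via Proposition~\ref{no1}, deduce (\ref{I:taut:nest}) and (\ref{I:taut:TiT}) from the resulting local trivializations, and then handle (\ref{I:taut:DiffS}) and (\ref{I:taut:iota}) using Propositions~\ref{L1} and~\ref{L2}. The paper packages the argument slightly differently---it first observes that the ambient bundle $\Diff_c(M)\times_{\Diff_c(M;\mathcal N)}M\cong\Flag_{\mathcal S}(M)_{\mathcal N}\times M$ is a diffeomorphism and then restricts along the splitting inclusions $N_i\subseteq N_{i+1}\subseteq M$, which makes the nested splitting in (\ref{I:taut:nest}) immediate---whereas you unwind this into explicit local trivializations $(\mathcal N',y)\mapsto(\mathcal N',k_{\mathcal N'}(y))$; but this is the same content, and your trivializations straighten out all the $\mathcal T_i$ simultaneously, so the nested splitting follows just as easily (your phrase ``built into the definitions'' undersells what your own construction actually gives).
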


\begin{proof}
Fix $\mathcal N=(N_1,\dotsc,N_r)\in\Flag_{\mathcal S}(M)$.
Using Proposition~\ref{no1} one readily shows that the action $\Diff_c(M)\times M\to M$ induces a canonical diffeomorphism of bundles over $\Flag_{\mathcal S}(M)_{\mathcal N}$,
\[
\Diff_c(M)\times_{\Diff_c(M;\mathcal N)}M=\Flag_{\mathcal S}(M)_{\mathcal N}\times M,\qquad[f,x]\leftrightarrow(f(\mathcal N),f(x)).
\]
Here the left hand side denotes the bundle associated \cite[Section~37.12]{KM} to the principal bundle $\Diff_c(M)\to\Flag_{\mathcal S}(M)_{\mathcal N}$ from Proposition \ref{no1}(\ref{I:FlagMN}) and the canonical action of its structure group $\Diff_c(M;\mathcal N)$ on $M$.
For $1\leq i\leq r$, this diffeomorphism restricts to the bijection in \eqref{E:taut}.
Since the left hand side is a splitting smooth submanifold in $\Diff_c(M)\times_{\Diff_c(M;\mathcal N)}M$, the right hand side is a splitting smooth submanifold in $\Flag_{\mathcal S}(M)_{\mathcal N}\times M$.
Analogously, we see that $\mathcal T_i|_{\Flag_{\mathcal S}(M)_{\mathcal N}}$ is a splitting smooth submanifold in $\mathcal T_{i+1}|_{\Flag_{\mathcal S}(M)_{\mathcal N}}$.
As every connected component of $\Flag_{\mathcal S}(M)$ is contained in $\Flag_{\mathcal S}(M)_{\mathcal N}$, for a suitable flag $\mathcal N$, we obtain (\ref{I:taut:nest}), (\ref{I:taut:TiT}) and (\ref{I:taut:DiffM}).
Using Proposition~\ref{L1} and the description in \eqref{E:taut}, one readily checks that the bijection in (\ref{I:taut:DiffS}) is indeed a diffeomorphism.
Combining this with Proposition~\ref{L2}, we obtain (\ref{I:taut:iota}).
\end{proof}

\section{Orientations}\label{S:oFlag}

The results on nonlinear flag manifolds presented in Section~\ref{S:Flag} admit obvious oriented analogues which are important for integration. 
Oriented flags are flags equipped with orientations and may be considered as decorated flags.
The manifold of all oriented nonlinear flags, denoted $\Flag_{\mathcal S}^\ori(M)$, is a finite covering of the corresponding nonoriented counterpart $\Flag_{\mathcal S}(M)$.

Before sketching the aforementioned results for oriented nonlinear flags, we briefly recall the corresponding facts for oriented nonlinear Grassmannians.
In a short interlude we describe, via integration, a $\Diff_c(M)$ equivariant smooth injective immersion of $\Flag_{\mathcal S}^\ori(M)$ into the space of currents on $M$.
The last two subsections are dedicated to the transgression of differential forms.
We use integration along the fiber of tautological bundles to get differential forms on oriented nonlinear Grassmannians, as well as on manifolds of oriented nonlinear flags, from  differential forms on $M$.

\subsection{Oriented nonlinear Grassmannians}\label{SS:oGr}

For a manifold $N$ we let $\mathcal O_N$ denote its orientation bundle.
Hence $\Gamma(\mathcal O_N)$ is the set of orientations of $N$, and $\Diff(N)$ acts naturally on $\Gamma(\mathcal O_N)$.

The nonlinear Grassmannian of oriented submanifolds,
\[
\Gr_S^\ori(M):=\left\{(N,o):N\in\Gr_S(M),o\in\Gamma(\mathcal O_N)\right\},
\] 
is a finite covering of the nonlinear Grassmannian $\Gr_S(M)$ which is canonically diffeomorphic to an associated bundle,
\begin{equation}\label{forg}
\Gr_S^\ori(M)=\Emb_S(M)\times_{\Diff(S)}\Gamma(\mathcal O_S)\to\Gr_S(M).
\end{equation}
If $S$ is not orientable, then the typical fiber $\Gamma(\mathcal O_S)$ is empty.
Otherwise, the covering has $2^{b_0(S)}$ sheets, where $b_0(S)$ denotes the number of connected components of $S$.
In particular, this is a double covering if $S$ is connected and orientable.

Connected components of $\Gr_S^\ori(M)$ may be regarded as a homogeneous space,
\[
\Gr_S^\ori(M)_{(N,o)}=\Diff_c(M)/\Diff_c(M;N,o).
\]
Here the left hand side denotes the $\Diff_c(M)$ orbit through $(N,o)\in\Gr_S^\ori(M)$, which is an open and closed subset in $\Gr_S^\ori(M)$.
Moreover, $\Diff_c(M;N,o)$ denotes the group of all compactly supported diffeomorphisms which preserve the submanifold $N$ and its orientation, $o$.

The covering provided by the forgetful map \eqref{forg} is nontrivial over $\Gr_S(M)_N$ if and only if there exists an orientation $o$ of $N$ and a compactly supported diffeomorphism in the connected component of the identity, $\Diff_c(M)_\circ$, which preserves the submanifold $N$ but does not preserve the orientation $o$.

All this follows readily from Lemma~\ref{L:classics}.

\begin{example}
The double coverings $\Gr^\ori_{S^1}(\mathbb R^3)\to\Gr_{S^1}(\mathbb R^3)$ and $\Gr_{S^1}^\ori(S^2)\to\Gr_{S^1}(S^2)$ are nontrivial, while $\Gr_{S^1}^\ori(\mathbb R^2)\to\Gr_{S^1}(\mathbb R^2)$ and $\Gr_{S^1}^\ori(S^1\x S^1)\to\Gr_{S^1}(S^1\x S^1)$ are trivial double coverings.
Indeed, if $S^1\cong N\subseteq\mathbb R^2$ is an embedded circle, then every diffeomorphism in $\Diff_c(\mathbb R^2;N)\cap\Diff_c(\mathbb R^2)_\circ$ restricts to an orientation preserving diffeomorphism on either connected component of the complement, $\mathbb R^2\setminus N$, and, thus, preserves the (induced boundary) orientation on $N$ too.
The same argument works for contractible circles in the torus, for the complement of such a circle consists of two nondiffeomorphic connected components.
If $S^1\cong N\subseteq S^1\times S^1$ is not contractible, then the inclusion induces an injective homomorphism in first homology, $H_1(N)\to H_1(S^1\times S^1)$.
As every diffeomorphism in $\Diff(S^1\times S^1;N)\cap\Diff(S^1\times S^1)_\circ$ induces the identity on $H_1(S^1\times S^1)$, its restriction to $N$ preserves the fundamental class of $N$ and, thus, the corresponding orientation also.
\end{example}

Each orientation $o_S$ of $S$, provides a $\Diff_c(M)$ equivariant map
\begin{equation}\label{pior}
\Emb_S(M)\to\Gr_{S,o_S}^\ori(M),\quad\varphi\mapsto(\varphi(S),\varphi_*o_S),
\end{equation}
which is a principal fiber bundle with structure group $\Diff(S;o_S)$, the group of orientation preserving diffeomorphisms.
Here $\Gr_{S,o_S}^\ori(M)$ denotes the image of this map which, by equivariance, is a $\Diff_c(M)$ invariant subset in $\Gr^\ori_S(M)$, i.e., the union of several connected components, cf.\ Lemma~\ref{L:classics}(\ref{I:DiffGr}).
This subset coincides with $\Gr_S^\ori(M)$ if and only if $\Diff(S)$ acts transitively on $\Gamma(\mathcal O_S)$, that is, iff each connected component of $S$ admits an orientation reversing diffeomorphism.

\subsection{Oriented nonlinear flags}\label{SS:oFlag}

Let us denote the space of all \emph{oriented nonlinear flags of type $\mathcal S$} by
\[
\Flag^\ori_{\mathcal S}(M):=\left\{\bigl((N_1,o_1),\dotsc,(N_r,o_r)\bigr)\in\prod_{i=1}^r\Gr_{S_i}^\ori(M)\middle|\forall i:N_i\subseteq N_{i+1}\right\}.
\]
It follows from Proposition~\ref{L1}(\ref{I:Flag}) that this is a splitting smooth submanifold in $\prod_{i=1}^r\Gr^\ori_{S_i}(M)$.
Moreover, the forgetful map $\Flag_{\mathcal S}^\ori(M)\to\Flag_{\mathcal S}(M)$ is a finite covering which is canonically diffeomorphic to an associated bundle,
\[
\Flag_{\mathcal S}^\ori(M)=\pFlag_{\mathcal S}(M)\times_{\Diff(\mathcal S)}\bigl(\Gamma(\mathcal O_{S_1})\times\cdots\times\Gamma(\mathcal O_{S_r})\bigr)\to\Flag_{\mathcal S}(M),
\]
where $\mathcal O_{S_i}$ denotes the orientation bundle of $S_i$.

Proceeding as in the proof of Proposition~\ref{smallest}, one readily verifies that $\Flag_{\mathcal S}^\ori(M)$ is diffeomorphic to a twisted product of the oriented nonlinear Grassmannians $\Gr_{S_1}^\ori(S_2),\dotsc,\Gr_{S_{r-1}}^\ori(S_r)$ and $\Gr_{S_r}^\ori(M)$, cf.\ Remark~\ref{R:pFlagtriv}.

\begin{remark}[Oriented nonlinear flags of codimension one]
Suppose $M$ comes equipped with a Riemannian metric and an orientation.
If, moreover, the dimensions of the modeling manifolds form consecutive integers, i.e., if
\[
\dim(S_i)+1=\dim(S_{i+1})\quad\text{and}\quad\dim(S_r)+1=\dim(M),
\]
then the tangent bundle of the oriented nonlinear flag manifold may be described more explicitly.
Indeed, the Riemannian metric and the orientations provide trivializations of the normal bundles, $TN_{i+1}|_{N_i}/TN_i\cong N_i\times\mathbb R$ and $TM|_{N_r}/TN_r\cong N_r\times\mathbb R$.
Combining this with \eqref{tnt}, we obtain an isomorphism
\begin{equation*}\label{ftnt}
T_{(\mathcal N,o)}\Flag^\ori_{\mathcal S}(M)\cong\prod_{i=1}^rC^\infty(N_i)
\end{equation*}
at $(\mathcal N,o)\in\Flag_{\mathcal S}^\ori(M)$.
This kind of description of the tangent space is useful for the shape space of oriented nonlinear flags of curves on surfaces in $\mathbb R^3$ considered in \cite{TV}.
\end{remark}

Using Proposition~\ref{no1} we see that the $\Diff_c(M)$ action on $\Flag_{\mathcal S}^\ori(M)$ admits local smooth sections and (connected components of) $\Flag_{\mathcal S}^\ori(M)$ is a homogeneous space of $\Diff_c(M)$,
\[
\Flag_{\mathcal S}^\ori(M)_{(\mathcal N,o)}=\Diff_c(M)/\Diff_c(M;\mathcal N,o).
\]
Here the left hand side denotes the $\Diff_c(M)$ orbit through $(\mathcal N,o)=\bigl((N_1,o_1),\dotsc,(N_r,o_r)\bigr)$ in $\Flag^\ori_{\mathcal S}(M)$ which is an open and closed subset in $\Flag_{\mathcal S}^\ori(M)$ in view of Proposition~\ref{no1}(\ref{I:DiffFlag}).
Moreover, $\Diff_c(M;\mathcal N,o)$ denotes the group of all compactly supported diffeomorphisms preserving each submanifold $N_i$ and its orientation $o_i$.
Since this is an open and closed subgroup in $\Diff_c(M;\mathcal N)$ it also is a splitting Lie subgroup of $\Diff_c(M)$ in view of Proposition~\ref{no1}(\ref{I:DiffMN}).

A sequence of orientations $o_{S_i}$ on each $S_i$, denoted by $o_{\mathcal S}=(o_{S_1},\dotsc,o_{S_r})$, provides a lift of the map $\pFlag_{\mathcal S}(M)\to\Flag_{\mathcal S}(M)$ across the covering $\Flag_{\mathcal S}^\ori(M)\to\Flag_{\mathcal S}(M)$. 
Its image consists of several connected components of $\Flag^\ori_{\mathcal S}(M)$ and will be denoted by $\Flag^\ori_{\mathcal S,o_{\mathcal S}}(M)$.
The lifted map yields a smooth principal bundle
\begin{equation}\label{ppuls}
\pFlag_{\mathcal S}(M)\to\Flag^\ori_{\mathcal S,o_{\mathcal S}}(M),\quad
\bigl(\varphi_1,\dotsc,\varphi_r)\mapsto\bigl((\varphi_1(S_1),(\varphi_1)_*o_{S_1}),\dotsc,(\varphi_r(S_r),(\varphi_r)_*o_{S_r})\bigr),
\end{equation}
with structure group $\Diff(\mathcal S;o_{\mathcal S}):=\prod_{i=1}^r\Diff(S_i,o_{S_i})$.

Suppose we are given a sequence $\iota$ of embeddings as in \eqref{star}.
Then composition of \eqref{ppuls} with the inclusion $\pFlag_{\mathcal S,\iota}(M)\subseteq\pFlag_{\mathcal S}(M)$ yields a lift of the map $\Emb_{S_r}(M)=\pFlag_{\mathcal S,\iota}(M)\to\Flag_{\mathcal S,\iota}(M)\subseteq\Flag_{\mathcal S}(M)$ across the covering $\Flag_{\mathcal S}^\ori(M)\to\Flag_{\mathcal S}(M)$.
Its image consists of several connected components of $\Flag^\ori_{\mathcal S}(M)$ and will be denoted by $\Flag^\ori_{\mathcal S,\iota,o_{\mathcal S}}(M)$.
The lifted map provides a smooth principal bundle
$$
\Emb_{S_r}(M)=\pFlag_{\mathcal S,\iota}(M)\to\Flag^\ori_{\mathcal S,\iota,o_{\mathcal S}}(M)
$$
with structure group $\Diff(S_r;o_{S_r},\Sigma,o_\Sigma)=\Diff(\mathcal S;\iota,o_{\mathcal S})$.
Here $\Diff(\mathcal S;\iota,o_{\mathcal S})$ denotes the (open and closed) subgroup of all elements in $\Diff(\mathcal S;\iota)$ which preserve the orientations $o_{S_1},\dotsc,o_{S_r}$.
Hence, $\Diff(\mathcal S;\iota,o_{\mathcal S})$ is a splitting Lie subgroup of $\Diff(\mathcal S)$ in view of Proposition~\ref{L2}(\ref{I:Diffiota}).
Moreover, $\Diff(S_r;o_{S_r},\Sigma,o_\Sigma)$ denotes the (open and closed) subgroup of all diffeomorphisms in $\Diff(S_r;\Sigma)$ that preserve the orientation $o_{S_r}$ and the orientation of $\Sigma_i$ corresponding to $o_{S_i}$, cf.~\eqref{E:Sigmai}, for $i<r$.
Hence, $\Diff(S_r;o_{S_r},\Sigma,o_\Sigma)$ is a splitting Lie subgroup of $\Diff(S_r)$ in view of Proposition~\ref{no1}(\ref{I:DiffMN}).
We obtain the following $\Diff(M)$ equivariant commutative diagram
$$
\xymatrix{
\Emb_{S_r}(M)\ar@/_3ex/[dr]_(.3){\Diff(S_r;o_{S_r},\Sigma,o_\Sigma)}\ar@{=}[r]&\pFlag_{\mathcal S,\iota}(M)\ar[d]^-{\Diff(\mathcal S;\iota,o_{\mathcal S})}\ar@{^(->}[r]&\pFlag_{\mathcal S}(M)\ar[d]^{\Diff(\mathcal S;o_{\mathcal S})}
\\
&\Flag^\ori_{\mathcal S,\iota,o_{\mathcal S}}(M)\ar@{^(->}[r]&\Flag^\ori_{\mathcal S,o_{\mathcal S}}(M),
}
$$
which may be regarded as a reduction of structure groups along the inclusion $\Diff(S_r;o_{S_r},\Sigma,o_\Sigma)=\Diff(\mathcal S;\iota,o_{\mathcal S})\subseteq\Diff(\mathcal S,o_{\mathcal S})$.
As before, the arrows indicating principal bundles are labeled with their structure groups.

\subsection{Manifolds of closed currents}\label{SS:currents}

For notational simplicity we assume $\dim(M)=n$ and $\dim(S)=k$ in this paragraph.
Integration provides a natural $\Diff_c(M)$ equivariant embedding of $\Gr^\ori_S(M)$ into the currents, i.e., distributional forms on $M$:
\begin{equation}\label{E:currentGr}
\Gr^\ori_S(M)\to\Omega^k(M)'=\Gamma_c^{-\infty}(\Lambda^{n-k}T^*M\otimes\mathcal O_M),\qquad\langle(N,o),\alpha\rangle:=\int_N\alpha\otimes o,
\end{equation}
where $\alpha\in\Omega^k(M)$.
This map, which resembles the classical Pl\"ucker embedding, is readily seen to be a smooth injective immersion.
The currents in its image are all closed by Stokes' theorem.
The cohomology class represented by the current associated with $(N,o)$ corresponds to its fundamental class via Poincar\'e duality,
\[
H_c^{n-k}(M;\mathcal O_M)\cong H_k(M).
\]
Clearly, the image of the open and closed subset $\Gr_S^\ori(M)_{(N,o)}$ of $\Gr_S^\ori(M)$ under the map \eqref{E:currentGr} consists of a single $\Diff_c(M)$ orbit of closed currents with very controlled singular support and wave front set.

Suppose $\mathcal S=(S_1,\dotsc,S_r)$ and consider the $\Diff_c(M)$ equivariant map obtained by composing \eqref{E:currentGr} with the canonical map $\Flag^\ori_{\mathcal S}(M)\to\Gr^\ori_{S_i}(M)$ and summing over $i$, that is,
\begin{equation}\label{E:currentFlag}
\Flag^\ori_{\mathcal S}(M)\to\Gamma_c^{-\infty}(\Lambda^*T^*M\otimes\mathcal O_M),\qquad\langle(\mathcal N,o),\alpha\rangle:=\sum_{i=1}^r\int_{N_i}\alpha\otimes o_i,
\end{equation}
where $\alpha\in\Omega^*(M)$.
If $\dim(S_i)=k_i$ and $k_1<k_2<\cdots<k_r$, then this map is a smooth injective immersion.
Clearly, its image consists of (nonhomogeneous) closed currents in $M$.
By equivariance, the image of the open and closed subset $\Flag_{\mathcal S}^\ori(M)_{(\mathcal N,o)}$ of $\Flag_{\mathcal S}^\ori(M)$ under the map \eqref{E:currentFlag} forms a single $\Diff_c(M)$ orbit of closed (nonhomogeneous) currents.

\subsection{Transgression to nonlinear Grassmannians}\label{SS:transGr}

We first recall the natural transgression of differential forms on $M$ 
to differential forms on the nonlinear Grassmannian $\Gr_S^\ori(M)$ of oriented submanifolds \cite[Section~2]{HV}.
Each $\al\in\Om^{\dim (S)+\ell}(M)$ induces $\tilde\al\in\Om^\ell(\Gr_S^\ori(M))$ by
\begin{equation}\label{100}
(\tilde\al)_N(\xi_1,\dots,\xi_\ell):=\int_Ni_{\xi_\ell}\dots i_{\xi_1}\al,
\quad\xi_i\in\Ga(TM|_{N}/TN).
\end{equation}
Clearly, the assignment $\alpha\mapsto\tilde\alpha$ is $\Diff(M)$ equivariant.
Moreover, the following identities hold \cite[Lemma~1]{HV}:
\begin{equation}\label{calc}
 \widetilde{d\al}=d\tilde\al; \qquad
i_{\ze_X}\tilde\al=\widetilde{i_X\al};\qquad
L_{\ze_X}\tilde\al=\widetilde{L_X\al}.
\end{equation}
Here $\ze_X$ denotes the infinitesimal action of $X\in\mathfrak X(M)$ on $\Gr^\ori_S(M)$.

Let $S$ be endowed with an orientation $o_S$.
Using the  fiber integral for the trivial $S$-bundle 
$\pr_1:\Emb_S(M)\x S\to\Emb_S(M)$, we define
\[
\hat\al:=(\pr_1)_!(\ev^*\al\otimes\pr_2^*o_S)\in\Om^\ell(\Emb_S(M)),
\] 
where $\ev:\Emb_S(M)\x S\to M$
denotes the evaluation map and $\pr_2:\Emb_S(M)\x S\to S$.
This is a basic form for the principal bundle projection $\pi_{o_S}\colon\Emb_S(M)\to\Gr^\ori_{S,o_S}(M)$, $\pi_{o_S}(\varphi)=(\varphi(S),\varphi_*o_S)$, cf.~\eqref{pior}, hence it descends to a form on $\Gr_{S,o_S}^\ori(M)$.
This is exactly the restriction of the transgression $\tilde\al$, thus $\hat\al=\pi_{o_S}^*\tilde\al$ (see \cite{Vizman}).

A more elegant way to obtain the transgressed form $\tilde\al$ uses the tautological bundle.
Let $\T^\ori$ denote the pullback of the tautological bundle $p:\T\to\Gr_S(M)$
by the forgetful map $\Gr^\ori_S(M)\to\Gr_S(M)$.
More concretely, we get the $S$-bundle
\[
p^\ori:\T^\ori=\{(N,o,x)\in\Gr_S^\ori(M)\times M:x\in N\}\to\Gr^\ori_S(M).
\]
Let $q^\ori:\T^\ori\to M$ denote the projection on the last factor. 
Then the transgression of $\al$ to $\Gr^\ori_S(M)$ can be expressed in the form
\begin{equation}\label{E:talpha}
\tilde\al=(p^\ori)_!((q^\ori)^*\al\otimes o_{p^\ori}),
\end{equation}
where $o_{p^\ori}\in\Ga(\mathcal O_{\ker Tp^\ori})$ denotes the canonical orientation of the vertical bundle of $p^\ori$ induced by identification of the fiber over $(N,o)$ with $N$ via the restriction of $q^\ori$.
Indeed, defining $\tilde\pi_{o_S}(\ph,x)=(\ph(S),\ph_*o_S,\ph(x))$, we obtain a commutative diagram 
$$
\xymatrix{
&M\\
\Emb_S(M)\x S\ar@/^2ex/[ur]^-\ev\ar[d]_-{\pr_1}\ar[rr]^-{\tilde\pi_{o_S}}&&\T^\ori\ar[d]^-{p^\ori}\ar@/_2ex/[ul]_-{q^\ori}
\\
\Emb_{S}(M)\ar[rr]^-{\pi_{o_S}}&&\Gr_S^\ori(M),
}
$$
where the rectangle is a pullback diagram.
Moreover, $\tilde\pi_{o_S}^*o_{p^\ori}=\pr_2^*o_S$.
Using the fact that integration along the fiber commutes with pullbacks \cite[7.12]{GHV} one obtains
\[
\pi_{o_S}^*\bigl((p^\ori)_!((q^\ori)^*\al\otimes o_{p^\ori})\bigr)
=(\pr_1)_!\tilde\pi_{o_S}^*\bigl((q^\ori)^*\al\otimes o_{p^\ori}\bigr)
=(\pr_1)_!(\ev^*\al\otimes\pr_2^*o_S)
=\hat\alpha
=\pi_{o_S}^*\tilde\alpha,
\]
and thus \eqref{E:talpha}, because the map $\pi_{o_S}$ is a submersion that covers any given connected component of $\Gr_S^\ori(M)$, for a suitable choice of $o_S$.

\subsection{Transgression to manifolds of nonlinear flags}\label{SS:transFlag}

It works similarly for the transgression of differential forms to the manifold $\Flag_{\mathcal S}^\ori(M)$ of oriented nonlinear flags.
We start with a collection of differential forms on $M$:
\[
\al=(\al_i),\quad\al_i\in\Om^{\dim (S_i)+\ell}(M).
\]
The transgression to $\Flag^\ori_{\mathcal S}(M)$ can be defined with the help of the transgression \eqref{100} to nonlinear Grassmannians by
\begin{equation}\label{delfin}
\tilde\al:=\sum_{i=1}^r\pr_i^*\tilde{\al_i}\in\Om^\ell(\Flag^\ori_{\mathcal S}(M)),
\end{equation}
with $\pr_i:\Flag^\ori_{\mathcal S}(M)\to\Gr^\ori_{S_i}(M)$ the projection on the $i$-th factor.

As above, there are two further descriptions of $\tilde\al$ via fiber integration.
The first one uses the pullback $\mathcal T_i^\ori$ of the tautological bundle $\mathcal T_i$ over $\Flag_{\mathcal S}(M)$ from Proposition~\ref{P:taut}, namely
\[
\mathcal T_i^\ori=\{((N_1,o_1),\dotsc,(N_r,o_r);x)\in\Flag^\ori_{\mathcal S}(M)\times M:x\in N_i\}.
\]
Let $p_i^\ori:\T_i^\ori\to \Flag_{\mathcal S}^\ori(M)$ denote the bundle projections and $q_i^\ori:\T^\ori_i\to M$.
Then the transgression $\tilde\alpha$ can be expressed using fiber integration along $\T_i^\ori$ in the form
\begin{equation}\label{E:tali}
\tilde\al=\sum_{i=1}^r(p_i^\ori)_!((q_i^\ori)^*\al_i\otimes o_{p_i^\ori}),
\end{equation}
where $o_{p_i^\ori}\in\Ga(\mathcal O_{\ker Tp_i^\ori})$ denotes the canonical orientation of the vertical bundle of $p_i^\ori$.
This follows from the right hand side of the subsequent commutative diagram, where the lower right rectangle is a pullback, cf.\ Proposition~\ref{P:taut}(\ref{I:taut:TiT}), using \eqref{E:talpha} and the fact that integration along the fiber commutes with pullbacks:
$$
\xymatrix{
&&M\\
\pFlag_{\mathcal S}(M)\x S_i\ar@/^2ex/[urr]^-{\ev_i}\ar[d]_-{\pr_1}\ar[rr]^-{\tilde\pi_{o_{\mathcal S}}}&&\T^\ori_i\ar[d]^-{p^\ori_i}\ar[u]_-{q^\ori_i}\ar[rr]^-{\tilde\pr_i}&&\mathcal T^\ori\ar[d]^-{p^\ori}\ar@/_2ex/[ull]_-{q^\ori}
\\
\pFlag_{\mathcal S}(M)\ar[rr]^-{\pi_{o_{\mathcal S}}}&&\Flag_{\mathcal S}^\ori(M)\ar[rr]^-{\pr_i}&&\Gr_{S_i}^\ori(M).
}
$$
For the other description we choose orientations $o_{\mathcal S}=(o_{S_1},\dotsc,o_{S_r})$ as in Section~\ref{SS:oFlag}. 
These give rise to the left hand side of the commutative diagram, where the lower left rectangle is a pullback too.
Using \eqref{E:tali} and proceeding as above, we obtain
$$
\pi_{o_{\mathcal S}}^*\tilde\alpha=\sum_{i=1}^r(\pr_1)_!\bigl(\ev_i^*\alpha_i\otimes\pr_2^*o_{S_i}\bigr).
$$
This completely characterizes $\tilde\alpha$ since the maps $\pi_{o_{\mathcal S}}$ are submersions covering all connected components of $\Flag^\ori_{\mathcal S}(M)$, as $o_{\mathcal S}$ varies over all possible orientations.

\section{Coadjoint orbits of symplectic nonlinear flags}\label{S:Flagsymp}

As an application of the results presented above, we will now discuss how certain coadjoint orbits of the Hamiltonian group $\Ham(M)$ of a closed symplectic manifold can be parametrized by nonlinear flag manifolds, cf.\ Theorem~\ref{teo2} below. 
This generalizes \cite[Theorem~3]{HV} about symplectic nonlinear Grassmannians (recalled in the first subsection below). 
We consider the manifold $\Flag_{\mathcal S}^\symp(M)$ of symplectic nonlinear flags, an open subset of $\Flag_{\mathcal S}(M)$. Using a transgression procedure (similar to the one for oriented nonlinear flags) we endow it with a natural symplectic form.
We show that the momentum map for the $\Ham(M)$ action realizes connected components of the symplectic manifold $\Flag_{\mathcal S}^\symp(M)$ as coadjoint orbits of $\Ham(M)$.

\subsection{Symplectic nonlinear Grassmannians}

Let $M$ be a closed manifold endowed with a symplectic form $\om$, and let $S$ be a closed $2k$-dimensional manifold.
The \emph{symplectic nonlinear Grassmannian} $\Gr_{S}^{\symp}(M)$ of symplectic submanifolds of $(M,\om)$ of type $S$, introduced and studied in \cite{HV}, is an open subset of the nonlinear Grassmannian $\Gr_S(M)$. 
Restricting the fundamental frame bundle in \eqref{E:principal} to $\Gr_S^\symp(M)$, we obtain a smooth principal bundle
\begin{equation}\label{sympuls}
\pi\colon\Emb_S^{\symp}(M)\to\Gr_S^{\symp}(M),
\end{equation} 
with the same structure group, $\Diff(S)$, where 
\[
\Emb^{\symp}_{S}(M)=\{\ph\in\Emb_S(M)\mid\ph^*\om\in\Om^2(S)\text{ symplectic}\}
\]
denotes the open subset of symplectic embeddings in $\Emb_S(M)$.
The group $\Symp(M)$ of symplectic diffeomorphisms acts on the manifold of symplectic embeddings into $M$, as well as on the symplectic nonlinear Grassmannian of $M$, and the principal bundle \eqref{sympuls} is $\Symp(M)$ equivariant.

A transgression functor similar to the one considered in Section~\ref{SS:transGr} exists for the symplectic nonlinear Grassmannian:
\[
\Om^{2k+\ell}(M)\ni\alpha\mapsto\tilde\al\in\Om^\ell(\Gr_S^\symp(M)).
\]
It has the same expression as in \eqref{100}, but no orientation is needed now, since the symplectic submanifolds are naturally oriented by their induced Liouville volume forms.
It also has similar functorial properties to the tilde calculus on oriented nonlinear Grassmannians \eqref{calc}.

Again there is a way to obtain the transgressed form $\tilde\al$ with a tautological bundle.
Let 
\[
p\colon\T^\symp\to\Gr_S^\symp(M)
\] 
denote the restriction of the tautological bundle $\T$ to the open subset $\Gr^\symp_S(M)\subseteq\Gr_S(M)$, and let $q:\T^\symp\to M$ denote the projection on the $M$ factor. 
Then
\[
\tilde\al=p_!(q^*\al\otimes o^\omega_p),
\]
where $o_p^\omega\in\Ga(\mathcal O_{\ker Tp})$ is the canonical orientation of the vertical bundle of $\T^\symp$ that comes from the orientation by the Liouville volume form of the fiber over the symplectic submanifold $N$, fiber identified to $N$ via $q$.

For the rest of this paragraph we follow \cite{HV}.
The symplectic nonlinear Grassmannian $\Gr_S^\symp(M)$ can be endowed with a natural symplectic form $\Om=\tfrac{1}{k+1}\widetilde{\om^{k+1}}$.
More precisely,
\begin{equation}\label{alfa}
\Om_N(\xi,\et):=\frac{1}{k+1}\int_Ni_\et i_\xi\om^{k+1},\quad\xi,\et\in T_N\Gr_S^{\symp}(M)=\Ga(TM|_N/TN),
\end{equation}
where the orientation on the $2k$-dimensional symplectic submanifold $N$ is the one induced by the Liouville volume form.

The Lie algebra of the Hamiltonian group $\Ham(M)$ is $\hamoe(M)$, the Lie algebra of Hamiltonian vector fields.
Since $M$ is compact, $\hamoe(M)$ can be identified with the Lie algebra $C_0^\infty(M)$ of functions with zero integral on each connected component, endowed with the Poisson bracket.
The action of $\Ham(M)$ on $\Gr_{S}^{\symp}(M)$ is transitive on connected components \cite[Proposition~3]{HV}.
Moreover, the action is Hamiltonian with injective $\Symp(M)$ equivariant moment map
\begin{equation}\label{ochi}
J:\Gr_{S}^{\symp}(M)\to C_0^\oo(M)^*=\hamoe(M)^*,\quad J(N)(f)=\int_N f\om^k.
\end{equation}
Indeed, functorial identities analogous to \eqref{calc} ensure that $i_{\ze_{X_f}}\Om=d(\widetilde{f\om^k})$, where the function $\widetilde{f\om^k}$ maps $N$ to $\int_Nf\om^k$.
The next result follows now by using a well known fact, also recalled in Proposition~\ref{spcase}.

\begin{theorem}[{\cite[Theorem 3]{HV}}]\label{old}
The restriction of the moment map $J\colon\Gr_S^\symp(M)\to\hamoe(M)^*$ in \eqref{ochi} to any connected component of $\Gr_S^\symp(M)$ is one-to-one onto a coadjoint orbit of the Hamiltonian group $\Ham(M)$. 
The Kostant--Kirillov--Souriau symplectic form $\om_\KKS$ on the coadjoint orbit satisfies $J^*\om_\KKS=\Om$.
\end{theorem}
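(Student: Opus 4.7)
The plan is to derive this as an instance of the standard principle (recalled in Proposition~\ref{spcase}) that an injective equivariant moment map for a transitive Hamiltonian action realizes the symplectic manifold as a coadjoint orbit, with the KKS form pulling back to the given symplectic form. The three ingredients, all already at hand, are: (i) injectivity and $\Symp(M)$ equivariance of $J$, as stated in the text; (ii) transitivity of the $\Ham(M)$ action on each connected component of $\Gr_S^\symp(M)$, by \cite[Proposition~3]{HV}; and (iii) the moment map identity $i_{\zeta_{X_f}}\Omega = d\,\widetilde{f\omega^k}$ for every $f\in C_0^\infty(M)$.

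The first step is to verify (iii) using the tilde calculus \eqref{calc}. For $f\in C_0^\infty(M)$ with Hamiltonian vector field $X_f$ (so $i_{X_f}\omega=df$), one computes
\[
i_{\zeta_{X_f}}\Omega = \tfrac{1}{k+1}\, i_{\zeta_{X_f}}\widetilde{\omega^{k+1}} = \tfrac{1}{k+1}\,\widetilde{i_{X_f}\omega^{k+1}} = \widetilde{df\wedge\omega^k} = \widetilde{d(f\omega^k)} = d\,\widetilde{f\omega^k},
\]
using $d\omega=0$ in the penultimate equality and $d\tilde\alpha=\widetilde{d\alpha}$ in the last. The function $\widetilde{f\omega^k}$ is by construction $N\mapsto\langle J(N),f\rangle$, so $\langle J(\cdot),f\rangle$ is a Hamiltonian for the infinitesimal action of $X_f$, whence $J$ is a moment map. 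Equivariance of $J$ under $\Ham(M)\subseteq\Symp(M)$ is immediate from the cited $\Symp(M)$ equivariance.

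With (i)--(iii) established, I would invoke Proposition~\ref{spcase}: for an equivariant moment map of a transitive Hamiltonian action on a connected symplectic manifold, the image is a single coadjoint orbit, the induced map onto this orbit is a local symplectomorphism with respect to $\omega_{\KKS}$, and in particular satisfies $J^*\omega_{\KKS}=\Omega$. Applying this component by component via the transitivity from \cite[Proposition~3]{HV}, and combining with the global injectivity of $J$, one concludes that $J$ maps each connected component of $\Gr_S^\symp(M)$ bijectively onto a coadjoint orbit of $\Ham(M)$ and pulls back $\omega_{\KKS}$ to $\Omega$.

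The main obstacle is largely bookkeeping: the genuinely substantive input is the transitivity of $\Ham(M)$ (rather than only $\Symp(M)$) on connected components of $\Gr_S^\symp(M)$, which is precisely what allows identification of the image as a coadjoint orbit of the Hamiltonian group as opposed to just an orbit in $\hamoe(M)^*$. Since this transitivity is delivered by \cite[Proposition~3]{HV}, and injectivity of $J$ is asserted in the text, the proof reduces to the moment map computation above and an invocation of the general principle in Proposition~\ref{spcase}.
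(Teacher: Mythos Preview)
Your proposal is correct and follows essentially the same route as the paper: the paper simply asserts the three ingredients (injectivity and $\Symp(M)$ equivariance of $J$, transitivity of $\Ham(M)$ on connected components from \cite[Proposition~3]{HV}, and the moment map identity via the tilde calculus) and then says the result follows from Proposition~\ref{spcase}. You have, in fact, spelled out the verification of the moment map identity in more detail than the paper does; the only hypothesis of Proposition~\ref{spcase} you might mention explicitly is infinitesimal transitivity, which is also part of \cite[Proposition~3]{HV} and is what is actually used to check $J^*\omega_{\KKS}=\Omega$.
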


Let us remark that the setting in \cite{HV} is slightly different:
there we consider the covering of $\Gr_S^\symp(M)$ that consists of oriented symplectic submanifolds of type $S$, an open subset of the oriented Grassmannian $\Gr_S^\ori(M)$, endowed with the symplectic form induced from $\frac{1}{k+1}\om^{k+1}$ by the transgression discussed in Section~\ref{SS:transGr}.

\subsection{Symplectic nonlinear flag manifolds}

We fix a sequence $\mathcal S=(S_1,\dotsc,S_r)$ of even dimensional manifolds: $\dim (S_i)=2k_i$ with 
\[
k_1<k_2<\dotsb<k_r.
\] 
The manifold of \emph{symplectic nonlinear flags of type $\mathcal S$},
$$
\Flag^\symp_{\mathcal S}(M):=\Flag_{\mathcal S}(M)\cap\prod_{i=1}^r\Gr_{S_i}^\symp(M),
$$
is an open subset of $\Flag_{\mathcal S}(M)$.
Restricting the fundamental frame bundle in \eqref{frames} to $\Flag^\symp_{\mathcal S}(M)$, we obtain a smooth principal bundle
\[
\pi\colon\pFlag^\symp_{\mathcal S}(M)\to\Flag^\symp_{\mathcal S}(M),
\]
with the same structure group, $\Diff(\mathcal S)=\prod_{i=1}^r\Diff(S_i)$, where
\[
\pFlag^{\symp}_{\mathcal S}(M)=\left\{(\varphi_1,\dotsc,\varphi_r)\in\pFlag_{\mathcal S}(M)\middle|\forall i:\ph_i^*\om\text{ symplectic}\right\}=\pFlag_{\mathcal S}(M)\cap\prod_{i=1}^{r}\Emb_{S_i}^{\symp}(M).
\]
denotes the open subset of all \emph{symplectic nonlinear frames of type $\mathcal S$.}

For each $1\le i\le r$, the differential form
\[
\al_i=\tfrac{1}{k_i+1}\om^{k_i+1}\in\Om^{2k_i+2}(M)
\]
induces a symplectic form $\Om_i:=\tilde\al_i$ on $\Gr^{\symp}_{S_i}(M)$,
by the transgression introduced in \eqref{100}.
Thus the collection $(\al_i)$ canonically induces a symplectic form on the product $\prod_{i=1}^r\Gr_{S_i}^{\symp}(M)$:
\[
\Om=\sum_{i=1}^r\pr_i^*\Om_i,
\]
where $\pr_i$ denotes the projection on the $i$-th factor.
The restriction of the symplectic form $\Om$ to the submanifold $\Flag_{\mathcal S}^{\symp}(M)\subseteq \prod_{i=1}^r\Gr_{S_i}^{\symp}(M)$, denoted again by $\Om$, can also be written as
\begin{equation}\label{omxiet}
\Om_{\N}(\xi,\et)=\sum_{i=1}^r\int_{N_i}i_{\et_i}i_{\xi_i}\al_i,\quad 
\xi=(\xi_i),\et=(\et_i)\in T_{\N}\Flag^{\symp}_{\mathcal S}(M),
\end{equation}
where each symplectic submanifold $N_i\subseteq M$ is endowed with the orientation given by the Liouville volume form.
Being an open subset of $\Flag_{\mathcal S}(M)$, the tangent space to $\Flag^\symp_{\mathcal S}(M)$ is as in \eqref{tnf}:
\[
T_{\mathcal N}\Flag^\symp_{\mathcal S}(M)
=\left\{(\xi_1,\dotsc,\xi_r)\in\prod_{i=1}^r\Gamma(TM|_{N_i}/TN_i)\middle|\forall i:\xi_{i+1}|_{N_i}=\xi_i\textnormal{ mod } TN_{i+1}|_{N_i}\right\}.
\]


\begin{proposition}\label{pro2}
The differential $2$-form $\Om$ in \eqref{omxiet} on $\Flag^{\symp}_{\mathcal S}(M)$ is symplectic.
\end{proposition}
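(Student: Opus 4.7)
Closedness of $\Om$ is automatic from the tilde calculus: each $\al_i=\om^{k_i+1}/(k_i+1)$ is closed on $M$, so by \eqref{calc} the transgression $\Om_i=\tilde\al_i$ is closed on $\Gr^\symp_{S_i}(M)$, and $\Om=\sum_i\pr_i^*\Om_i$ is closed. The substantive issue is non-degeneracy, which has to contend with the compatibility constraint $\xi_{i+1}|_{N_i}\equiv\xi_i\mod TN_{i+1}|_{N_i}$ linking the components of a tangent vector.

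My plan is to introduce symplectically adapted coordinates on $T_{\mathcal N}\Flag^\symp_{\mathcal S}(M)$ that diagonalize $\Om$ as a direct sum of contributions, one per submanifold of the flag. Using that each $N_i$ is symplectic, identify $TM|_{N_i}/TN_i\cong TN_i^\om$, and introduce the symplectic subbundles $W_i:=TN_i^\om\cap TN_{i+1}$ over $N_i$ (for $i<r$) and $W_r:=TN_r^\om$. From $TN_{i+1}=TN_i\oplus W_i$ and $TM|_{N_i}=TN_{i+1}|_{N_i}\oplus TN_{i+1}^\om|_{N_i}$ one obtains the iterated symplectically orthogonal decomposition $TN_i^\om=W_i\oplus W_{i+1}|_{N_i}\oplus\cdots\oplus W_r|_{N_i}$. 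Unwinding the compatibility constraint then yields a linear isomorphism
\[
T_{\mathcal N}\Flag^\symp_{\mathcal S}(M)\cong\prod_{j=1}^r\Ga(W_j),\qquad\xi_i=\sum_{j\ge i}\eta_j|_{N_i},
\]
with each $\eta_j\in\Ga(W_j)$ chosen freely.

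A short calculation using $(i_\xi\om)|_{TN_i}=0$ for $\xi\in TN_i^\om$ gives $i_{\xi'_i}i_{\xi_i}\al_i|_{N_i}=\om(\xi_i,\xi'_i)\,\om^{k_i}|_{N_i}$. The key observation is that $\om(W_j,W_{j'})=0$ whenever $j\ne j'$: if $j<j'$, then $W_j\subseteq TN_{j+1}\subseteq TN_{j'}$ while $W_{j'}\subseteq TN_{j'}^\om$. After substituting the expansion of $\xi_i$ and $\xi'_i$, all off-diagonal cross-terms drop out, leaving the diagonalized expression
\[
\Om(\xi,\xi')=\sum_{j=1}^r B_j(\eta_j,\eta'_j),\qquad B_j(\eta,\eta'):=\sum_{i=1}^j\int_{N_i}\om(\eta|_{N_i},\eta'|_{N_i})\,\om^{k_i}|_{N_i}.
\]

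Non-degeneracy of $\Om$ thus reduces to non-degeneracy of each $B_j$ on $\Ga(W_j)$. Given a nonzero $\eta\in\Ga(W_j)$, I use the strict dimension gap $\dim N_{j-1}<\dim N_j$ (from $k_{j-1}<k_j$; with the convention $N_0:=\emptyset$) to find $x\in N_j\setminus N_{j-1}$ with $\eta(x)\ne 0$. I then pick a neighborhood $U\subseteq N_j$ of $x$ disjoint from $N_{j-1}$ and a section $\eta'\in\Ga(W_j)$ supported in $U$ such that the function $\om(\eta,\eta')$ is nonnegative on $N_j$ and strictly positive at $x$; this is possible because $\om|_{W_j}$ is fiberwise non-degenerate. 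Since $N_i\subseteq N_{j-1}$ for $i<j$, every summand with $i<j$ in $B_j(\eta,\eta')$ vanishes, leaving $\int_{N_j}\om(\eta,\eta')\,\om^{k_j}|_{N_j}>0$. The main obstacle is the bookkeeping to set up this decomposition correctly and to verify the off-diagonal vanishing $\om(W_j,W_{j'})=0$; once that is in place, localizing the test section off the lower strata makes non-degeneracy immediate.
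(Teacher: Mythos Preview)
Your argument is correct. Both proofs take closedness for granted (you make it explicit via the tilde calculus) and reduce nondegeneracy to a localization off the lower strata, but the routes differ in structure. The paper argues by direct elimination: assuming $\xi\in\ker\Om_{\N}$, it tests against tangent vectors $\eta$ with $\eta_1=\cdots=\eta_{r-1}=0$ (forcing $\eta_r|_{N_{r-1}}=0$), uses a tamed almost complex structure to produce a suitable $\eta_r$ supported on $N_r\setminus N_{r-1}$, concludes $\xi_r=0$ by continuity, and then repeats. You instead diagonalize $\Om$ in advance: using the symplectic orthogonal bundles $W_j=TN_j^\om\cap TN_{j+1}$ (and $W_r=TN_r^\om$) you set up an isomorphism $T_{\N}\Flag^\symp_{\mathcal S}(M)\cong\prod_j\Ga(W_j)$ which is the symplectic analogue of the Riemannian splitting \eqref{tnt}, verify the off-diagonal vanishing $\om(W_j,W_{j'})=0$, and then show each diagonal block $B_j$ is nondegenerate by the same localization idea. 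Your approach costs a little more bookkeeping but yields the extra structural information that $\Om$ is block-diagonal in these coordinates, and it avoids invoking an auxiliary almost complex structure since $\om|_{W_j}$ is already fiberwise nondegenerate. The paper's approach is shorter and does not require naming the decomposition.
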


\begin{proof}
We only have to show that $\Om$ is weakly nondegenerate. 
An arbitrary tangent vector $\xi=(\xi_i)\in\ker\Om_{\N}$ satisfies  
\[
\sum_{i=1}^r\frac{1}{k_i+1}\int_{N_i}i_{\et_i}i_{\xi_i}\om^{k_i+1}=0,
\quad\forall\et=(\et_i)\in T_{\N}\Flag^{\symp}_{\mathcal S}(M).
\]
First we consider only those  tangent vectors $\et$ with $\et_i=0$ for $1\le i\le r-1$. It follows that the restriction $\et_r|_{N_{r-1}}=0$.
We obtain the identity
\[
\frac{1}{k_r+1}\int_{N_r}i_{\et_r}i_{\xi_r}\om^{k_r+1}=\int_{N_r}\om(\xi_r,\et_r)\om^{k_r}=0
\] 
for all $\et_r\in\Ga(TM|_{N_r}/TN_r)\cong\Ga(TN_r^\om)$ that satisfy $\et_r|_{N_{r-1}}=0$.
With the help of an almost complex structure on $M$ tamed by $\om$, we deduce that $\xi_r|_{N_r\setminus N_{r-1}}=0$, and by continuity $\xi_r=0$ on whole $N_r$.
By repeating this procedure, we successively obtain that all components $\xi_i$ of $\xi$ must vanish, hence $\Om$ is nondegenerate.
\end{proof}

The action of $\Ham(M)$ on the product of symplectic manifolds $\prod_{i=1}^r\Gr_{S_i}^{\symp}(M)$ is Hamiltonian with $\Symp(M)$ equivariant moment map
\[
\bar J\colon\prod_{i=1}^r\Gr_{S_i}^{\symp}(M)\to\hamoe(M)^*,
\quad\bar J=\sum_{i=1}^r\pr_i^*J_i,
\]
where $J_i\colon\Gr_{S_i}^\symp(M)\to\hamoe(M)^*$ is the moment map \eqref{ochi} for $S=S_i$.
The manifold $\Flag_{\mathcal S}^\symp(M)$ is invariant under the action of $\Symp(M)$.
Hence, the action of $\Ham(M)$ restricted to $\Flag^\symp_{\mathcal S}(M)$ is Hamiltonian, with $\Symp(M)$ equivariant moment map $J$ given by the restriction of the moment map $\bar J$, thus
\begin{equation}\label{ibis}
J\colon\Flag^{\symp}_{\mathcal S}(M)\to C^\oo_0(M)^*=\hamoe(M)^*,
\quad\langle{J}(N_1,\dotsc,N_r),f\rangle:=\sum_{i=1}^r\int_{N_i}f\omega^{k_i},
\end{equation}
with each $N_i$ oriented by its Liouville volume form.

\begin{lemma}\label{bach}
The moment map $J$ in \eqref{ibis} is injective.
\end{lemma}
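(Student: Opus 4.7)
The plan is to prove injectivity by downward induction on the flag index: given $\mathcal N=(N_1,\dotsc,N_r)$ and $\mathcal N'=(N_1',\dotsc,N_r')$ in $\Flag^\symp_{\mathcal S}(M)$ with $J(\mathcal N)=J(\mathcal N')$, I will show that whenever $N_j=N_j'$ for every $j>i$ one also has $N_i=N_i'$. Iterating from $i=r$ down to $i=1$ then yields $\mathcal N=\mathcal N'$. Throughout set $2n:=\dim M$, so that $\om^n$ is a volume form on $M$.

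For the inductive step, suppose for contradiction that $N_i\neq N_i'$. After swapping $\mathcal N$ and $\mathcal N'$ if necessary, pick $x\in N_i\setminus N_i'$; since $N_j\cap N_i$ and $N_j'\cap N_i$ have dimension $2k_j<2k_i$ for $j<i$, one may arrange that $x$ additionally misses every $N_j$ and $N_j'$ with $j<i$. Let $C$ be the connected component of $M$ through $x$, and choose an open ball $U\subseteq C$ about $x$ that is disjoint from $N_j$ for $j<i$ and from $N_j'$ for $j\le i$.

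The heart of the argument is the construction of a test function in $\hamoe(M)=C_0^\infty(M)$ isolating the discrepancy at $x$. I would take a non-negative bump $f_\ep$ supported in $U$ with $f_\ep(x)>0$, and an auxiliary $h\in C^\infty(M)$ supported in $C$ away from every $N_j$ and $N_j'$, normalized so that $\int_C h\,\om^n=1$. Then $g_\ep:=f_\ep-\bigl(\int_C f_\ep\,\om^n\bigr)h$ lies in $C_0^\infty(M)$, so the difference $D:=J(\mathcal N)-J(\mathcal N')$ vanishes on $g_\ep$. But $D(h)=0$ since $h$ is supported away from every $N_j$ and $N_j'$, while in $D(f_\ep)$ the $j<i$ contributions vanish by the choice of $U$, the $j>i$ contributions cancel by the induction hypothesis, and the $j=i$ term collapses to $\int_{N_i}f_\ep\,\om^{k_i}$ because $U\cap N_i'=\emptyset$. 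This integral is strictly positive since $\om^{k_i}|_{N_i}$ equals $k_i!$ times the Liouville volume form on the symplectic submanifold $N_i$, and $f_\ep\ge0$ with $f_\ep(x)>0$; this contradicts $D(g_\ep)=0$.

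The main obstacle I anticipate is the existence of the auxiliary function $h$, which requires $\bigcup_j(N_j\cup N_j')$ to be a proper subset of $C$. This is immediate when $k_r<n$, but may fail when $k_r=n$, since then $N_r$ and $N_r'$ are unions of connected components of $M$ and may cover $C$ entirely. I would treat this corner case by a preliminary step: restrict attention to a component of $M$ contained in the symmetric difference $N_r\triangle N_r'$ and run an analogous scaling argument to show that such a component cannot meet the shorter flag $(N_j)_{j<r}$ or $(N_j')_{j<r}$; after this reduction the effective top dimension drops to $2k_{r-1}<2n$, and the construction of $h$ proceeds as above.
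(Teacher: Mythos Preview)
Your main argument is correct and follows the same downward induction as the paper: assume $N_j=N_j'$ for $j>i$, pick $x\in N_i\setminus N_i'$, and test against a bump supported near $x$. You are in fact more careful than the paper on one point: the paper simply takes ``a positive function $f\in C_0^\infty(M)$'', which is a slight abuse since a nonzero nonnegative function cannot have zero integral on each component; your correction via the auxiliary $h$ is exactly the right way to make this rigorous. The paper also does not bother to avoid the lower $N_j,N_j'$ with $j<i$; it just uses that all the terms $\int_{N_j'}f\,\omega^{k_j}$ are nonnegative (Liouville orientation) with the $j=i$ term strictly positive. Your more elaborate choice of $x$ and $U$ isolating the $i$-th level is fine but not needed.

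Your treatment of the corner case $k_r=n$, however, cannot be salvaged. The lemma is actually \emph{false} in that case: take $r=1$, $M=M_1\sqcup M_2$ with $M_1$ and $M_2$ diffeomorphic closed symplectic $2n$-manifolds, and $S_1=M_1$. Then $N=M_1$ and $N'=M_2$ lie in $\Gr^{\symp}_{S_1}(M)$, but for every $f\in C_0^\infty(M)$ one has $\int_{M_1}f\omega^n=0=\int_{M_2}f\omega^n$, so $J(N)=J(N')=0$ while $N\neq N'$. Hence no ``preliminary step'' of the kind you sketch can work. The paper tacitly assumes $k_r<n$ (the submanifolds are proper), and under that assumption the complement of $\bigcup_j(N_j\cup N_j')$ meets every component of $M$, your $h$ exists, and your argument is complete. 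Simply add the hypothesis $\dim S_r<\dim M$ and drop the last paragraph.
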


\begin{proof}
Let $\N',\N''\in\Flag_{\mathcal S}^{\symp}(M)$ such that $J(\N')=J(\N'')$.
Assume by contradiction that $N''_r\not\subseteq N_r'$.
We choose $x\in N'_r\setminus N''_r$ and a positive function $f\in C_0^\oo(M)$ with support in a small neighborhood of $x$ disjoint from $N''_r$ (hence disjoint from all $N''_i$).
We get a contradiction because $0<\sum_{i=1}^r\int_{N'_i}f\om^{k_i}=\sum_{i=1}^r\int_{N''_i}f\om^{k_i}=0$.
It follows that $N'_r=N''_r$.
We proceed in the same manner successively with all the other nested submanifolds, finally obtaining $\N'=\N''$, hence the injectivity of $J$.
\end{proof}

\subsection{Coadjoint orbits of the Hamiltonian group}

\begin{proposition}\label{unknown}
The group $\Ham(M)$ acts infinitesimally and locally transitive 
on the manifold of symplectic nonlinear flags $\Flag_{\mathcal S}^{\symp}(M)$.
\end{proposition}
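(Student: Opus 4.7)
The plan is to prove infinitesimal transitivity by induction on $r$, and then upgrade it to local transitivity via a Moser-type isotopy. The base case $r=1$ is the infinitesimal content of Theorem~\ref{old}, i.e.\ \cite[Proposition~3]{HV}: $\Ham(M)$ already acts transitively on each connected component of $\Gr^\symp_{S_r}(M)$, so its infinitesimal action there is surjective.

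For the inductive step, fix a symplectic flag $\N=(N_1,\dotsc,N_r)$ and a tangent vector $\xi=(\xi_1,\dotsc,\xi_r)$ at $\N$; I want to produce $F\in C^\infty(M)$ with $X_F|_{N_i}\equiv\xi_i\bmod TN_i$ for every $i$. Applying the base case at the top stratum $N_r\subseteq M$ gives $F_0\in C^\infty(M)$ realizing $\xi_r$, i.e.\ $X_{F_0}|_{N_r}\equiv\xi_r\bmod TN_r$. Setting $\et_i:=\xi_i-(X_{F_0}|_{N_i}\bmod TN_i)$ for $i<r$, the compatibility relations $\xi_{i+1}|_{N_i}\equiv\xi_i\bmod TN_{i+1}|_{N_i}$ in \eqref{tnf} combined with $X_{F_0}|_{N_r}\equiv\xi_r\bmod TN_r$ force each $\et_i$ to lie in $\Gamma(TN_r|_{N_i}/TN_i)$, and a short check shows that $(\et_1,\dotsc,\et_{r-1})$ is a tangent vector at the shorter symplectic flag $(N_1,\dotsc,N_{r-1})$ of the symplectic manifold $(N_r,\om|_{N_r})$. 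By the inductive hypothesis applied inside $N_r$, there exists $f\in C^\infty(N_r)$ whose Hamiltonian vector field with respect to $\om|_{N_r}$ realizes $(\et_1,\dotsc,\et_{r-1})$. I then extend $f$ to $F_1\in C^\infty(M)$ using a symplectic tubular neighborhood of $N_r$ in $M$, choosing $F_1|_{N_r}=f$ and arranging that $dF_1$ vanishes on $TN_r^\om|_{N_r}$; this forces $X_{F_1}$ to be tangent to $N_r$ and to restrict there to the intrinsic Hamiltonian vector field of $f$. The function $F:=F_0+F_1$, normalized by a constant on each connected component of $M$ to lie in $C^\infty_0(M)$, then satisfies $X_F|_{N_i}\equiv\xi_i\bmod TN_i$ for all $i$, proving infinitesimal transitivity.

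For local transitivity, given $\N'$ close to $\N$ in $\Flag^\symp_{\mathcal S}(M)$, I would join them by a smooth path $\N_t$, apply the construction of the previous paragraph with smoothly varying parameters to produce a time-dependent Hamiltonian $F_t\in C^\infty_0(M)$ whose infinitesimal action at $\N_t$ equals $\dot\N_t$, and integrate to obtain a Hamiltonian isotopy $\ph_t\in\Ham(M)$ with $\ph_t(\N)=\N_t$; in particular $\ph_1(\N)=\N'$. The main obstacle is precisely this smoothness in $t$: the base-case Hamiltonian $F_0$, the symplectic tubular neighborhood of $N_r$, the extension of $f$ to $F_1$, and the auxiliary cutoff functions must all be chosen to depend smoothly on the moving flag $\N_t$. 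Each ingredient admits such a smooth parametric version, but assembling them compatibly in the Fr\'echet setting is where the bulk of the technical work lies.
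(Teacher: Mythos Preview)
Your proposal is correct and follows essentially the same approach as the paper: realize $\xi_r$ via \cite[Proposition~3]{HV}, then correct the lower strata by a Hamiltonian on $N_r$ extended to $M$ with vanishing normal derivative along $N_r$ (so that the extended Hamiltonian vector field is tangent to $N_r$ and restricts to the intrinsic one), and finally upgrade to local transitivity by integrating a smoothly varying time-dependent Hamiltonian along a path of flags. The only cosmetic difference is that you package the correction step as a single application of the inductive hypothesis inside $N_r$, whereas the paper unrolls this explicitly, descending one stratum at a time and invoking only the Grassmannian base case at each step; the paper is equally brief about smooth parameter dependence in the Moser argument.
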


\begin{proof}
For the infinitesimal transitivity let $(\xi_1,\dots,\xi_r)\in T_{\N}\Flag^{\symp}_{\mathcal S}(M)$, i.e., the normal sections $\xi_i\in\Gamma(TM|_{N_i}/TN_i)$ satisfy 
\begin{equation}\label{xix}
\xi_{i+1}|_{N_i}=\xi_i\textnormal{ mod } TN_{i+1}|_{N_i}, \quad i=1,\dots,r-1.
\end{equation}
The action of $\hamoe(M)$ on the nonlinear symplectic Grassmannian $\Gr_{S}^{\symp}(M)$ is infinitesimally transitive \cite[Proposition~3]{HV}, so there exists $h_r\in C^\oo(M)$ with
$$
\xi_r=X_{h_r}|_{N_r}\textnormal{ mod } TN_r.
$$ 
Now from \eqref{xix} follows that $X_{h_r}|_{N_{r-1}}\textnormal{ mod } TN_r|_{N_{r-1}}=\xi_{r-1}\textnormal{ mod } TN_r|_{N_{r-1}}$, so that
$$
\xi_{r-1}-X_{h_r}|_{N_{r-1}}\textnormal{ mod } TN_{r-1}\in\Ga(TN_r|_{N_{r-1}}/TN_{r-1}).
$$
Applying this time the infinitesimal transitivity of the $\hamoe(N_r)$ action on  $\Gr^{\symp}_{S_{r-1}}(N_r)$ at $N_{r-1}$,
we find $f_{r}\in C^\oo(N_{r})$ such that 
$$\xi_{r-1}-X_{h_r}|_{N_{r-1}}\textnormal{ mod } TN_{r-1}=X_{f_{r}}|_{N_{r-1}}\textnormal{ mod } TN_{r-1}. 
$$
One can always choose an extension  $\tilde f_{r}\in C^\oo(M)$ of $f_{r}$
such that its normal derivatives along the symplectic submanifold $N_r\subseteq M$ vanish (recall the decomposition 
$TM|_{N_r}=TN_r\oplus TN_r^\om$ with $TN_r^\om$ denoting the symplectic orthogonal of $TN_r$).
Hence the Hamiltonian vector field $X_{\tilde f_{r}}$ 
restricted to $N_{r}$ is equal to $X_{f_r}\in\hamoe(N_r)$.

Now define the function $h_{r-1}:=h_r+\tilde f_{r}\in C^\oo(M)$. 
Its Hamiltonian vector field  $X_{h_{r-1}}$ satisfies 
$$
\xi_r=X_{h_{r-1}}|_{N_r}\textnormal{ mod } TN_r\text{ and }\xi_{r-1}=X_{h_{r-1}}|_{N_{r-1}}\textnormal{ mod } TN_{r-1},
$$
because $X_{\tilde f_r}|_{N_r}$ is tangent to $N_r$ and $X_{\tilde f_r}|_{N_{r-1}}=X_{f_r}|_{N_{r-1}}$.

We proceed in the same manner with $\xi_{r-2},\dots,\xi_1$, obtaining in the end $h:=h_1\in C^\oo(M)$
with the properties $\xi_i=X_{h}|_{N_i}\textnormal{ mod } TN_{i}$ for all $i$.
Thus the infinitesimal generator of the Hamiltonian vector field $X_{h}$ at 
$\N\in\Flag_{\mathcal S}^{\symp}(M)$ is the tangent vector $(\xi_1,\dots,\xi_r)$
we started with.

To show local transitivity, suppose $t\mapsto\mathcal N(t)=(N_1(t),\dotsc,N_r(t))$ is a smooth curve in $\Flag^\symp_{\mathcal S}(M)$.
By infinitesimal transitivity, there exists a time dependent Hamiltonian vector field $X_t$ on $M$ such that
\begin{equation*}\label{E:Xt}
\tfrac\partial{\partial t}N_i(t)=X_t|_{N_i(t)}\textnormal{ mod } TN_i(t),\qquad1\leq i\leq r.
\end{equation*}
It is clear from the construction above that $X_t$ may be chosen to depend smoothly on $t$.
Moreover, there exist parametrizations $\varphi_i(t)\in\Emb_{S_i}(M)$ such that
\begin{equation}\label{E:phiit}
\varphi_i(t)(S_i)=N_i(t)\qquad\text{and}\qquad
\tfrac\partial{\partial t}\varphi_i(t)=X_t\circ\varphi_i(t),\qquad1\leq
i\leq r.
\end{equation}
Integrating the time dependent Hamiltonian vector field $X_t$, we obtain a smooth curve of Hamiltonian diffeomorphisms $f_t$ on $M$ such that $\frac\partial{\partial t}f_t=X_t\circ f_t$.
Combining this with \eqref{E:phiit}, we obtain $f_t\circ\varphi_i(0)=\varphi_i(t)$ and then $f_t(N_i(0))=N_i(t)$.
Hence, $f_t(\mathcal N(0))=\mathcal N(t)$, for all $t$.
Since the Hamiltonian group is locally connected by smooth arcs \cite[43.13]{KM}, we conclude that the action is locally transitive.
\end{proof}

Now a result similar to Theorem~\ref{old} follows, by using Lemma \ref{bach} and Proposition~\ref{unknown} together with the well known fact recalled in Proposition~\ref{spcase}.

\begin{theorem}\label{teo2}
The restriction of the moment map $J\colon\Flag_{\mathcal S}^\symp(M)\to\hamoe(M)^*$ in \eqref{ibis} to any connected component is one-to-one onto a coadjoint orbit of the Hamiltonian group $\Ham(M)$. 
The Kostant--Kirillov--Souriau symplectic form $\om_\KKS$ on the coadjoint orbit satisfies $J^*\om_\KKS=\Om$.
\end{theorem}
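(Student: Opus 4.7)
The plan is to deduce Theorem~\ref{teo2} from the three ingredients already in place: the moment map property of $J$ established in \eqref{ibis}, the injectivity of $J$ proved in Lemma~\ref{bach}, and the local transitivity of the $\Ham(M)$ action proved in Proposition~\ref{unknown}. These inputs feed into the general principle cited as Proposition~\ref{spcase}: if a Lie group $G$ acts on a symplectic manifold $(\mathcal M,\Omega)$ in a Hamiltonian fashion with equivariant moment map $J\colon\mathcal M\to\mathfrak g^*$ which is injective and such that the action is locally transitive, then each connected component of $\mathcal M$ is mapped diffeomorphically onto a coadjoint orbit, and $J^*\omega_{\KKS}=\Omega$. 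This is precisely the template used to derive Theorem~\ref{old} from the analogous assertions for symplectic nonlinear Grassmannians, so our task reduces to checking that the hypotheses of that template are satisfied in the flag setting.

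Concretely, I would proceed in the following order. First, by Proposition~\ref{unknown}, the orbits of $\Ham(M)$ in $\Flag^\symp_{\mathcal S}(M)$ are both open and closed, hence each connected component of $\Flag^\symp_{\mathcal S}(M)$ is a single $\Ham(M)$ orbit. Second, by the $\Symp(M)$ equivariance of $J$ (which in particular yields $\Ham(M)$ equivariance with respect to the coadjoint action), the image of any such connected component lies entirely in one coadjoint orbit $\mathcal O\subseteq\hamoe(M)^*$. Third, Lemma~\ref{bach} upgrades this equivariant surjection onto $\mathcal O$ to a bijection. Fourth, the moment map identity $i_{\zeta_{X_f}}\Omega=d\langle J,f\rangle$, which follows from \eqref{omxiet} and the tilde calculus \eqref{calc} applied to each summand $\alpha_i=\tfrac{1}{k_i+1}\omega^{k_i+1}$, gives the pullback identity $J^*\omega_{\KKS}=\Omega$ by the standard computation on a homogeneous Hamiltonian manifold: for infinitesimal generators $\zeta_{X_f},\zeta_{X_g}$ one has
\[
J^*\omega_{\KKS}(\zeta_{X_f},\zeta_{X_g})=\langle J,\{f,g\}\rangle=\Omega(\zeta_{X_f},\zeta_{X_g}),
\]
and the generators span the tangent space by infinitesimal transitivity. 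Finally, since $J$ is injective, equivariant, and locally a diffeomorphism in view of this pullback identity (as $\Omega$ is nondegenerate by Proposition~\ref{pro2}), it is a global diffeomorphism from the connected component onto $\mathcal O$.

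The only point that requires a bit of care is the transition from the moment map identity to the statement that $J$ is a diffeomorphism onto a full coadjoint orbit rather than merely into one. In an infinite-dimensional Fr\'echet setting one cannot invoke an implicit function theorem to conclude openness of the image from nondegeneracy alone. The substitute is the equivariance combined with local transitivity on the source and on the target: since $\Ham(M)$ acts transitively on any coadjoint orbit and, by Proposition~\ref{unknown}, locally transitively on each connected component of $\Flag^\symp_{\mathcal S}(M)$, the equivariant map $J$ sends the orbit of a point $\mathcal N$ onto the coadjoint orbit of $J(\mathcal N)$; combined with injectivity from Lemma~\ref{bach} this yields the required bijection. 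This is exactly the content of Proposition~\ref{spcase}, so once the hypotheses are verified the conclusion is immediate.
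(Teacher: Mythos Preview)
Your proposal is correct and follows exactly the same route as the paper: the paper's proof is a one-liner that invokes Lemma~\ref{bach} (injectivity of $J$), Proposition~\ref{unknown} (local and infinitesimal transitivity of the $\Ham(M)$ action), and the folklore Proposition~\ref{spcase}, which is precisely the template you spell out in detail. Your additional remarks about the infinite-dimensional subtleties are accurate but not strictly needed, since Proposition~\ref{spcase} only asserts a bijection onto the coadjoint orbit together with $J^*\omega_{\KKS}=\Omega$, and these follow from equivariance, transitivity on each connected component, injectivity, and the moment map identity exactly as you describe.
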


The coadjoint orbits of symplectic submanifolds can be obtained via symplectic reduction in a dual pair \cite{GBV17}.
The coadjoint orbits of symplectic nonlinear flags can also be related to a dual pair \cite{HV20}: they are obtained again by performing symplectic reduction on one leg of the dual pair.

\appendix
\section{Equivariant moment maps}

For the reader's convenience we prove here a result that belongs to mathematical folklore. The case of a nonequivariant moment map is presented in \cite[Proposition~1]{HV}.

Let $G$ be a Lie group acting from the left in a Hamiltonian way on the symplectic manifold $(\mathcal M,\Om)$, and let $\ze_X\in\mathfrak X(\mathcal M)$ denote the infinitesimal action of $X\in\g$. 
The defining identity of the moment map $J:\mathcal M\to\g^*$ is
\[
d\langle J,X\rangle=i_{\ze_X}\Om, \quad X\in\g.
\]
If $J$ is $G$ equivariant, i.e., 
\[
J(g\cdot x)=\Ad_{g^{-1}}^*J(x),\quad g\in G,\ x\in\mathcal M,
\]
then it is infinitesimally equivariant, i.e.,
\begin{equation}\label{stea}
dJ(\ze_X(x))=-\ad^*_XJ(x),\quad X\in\g,\ x\in\mathcal M.
\end{equation}
For finite dimensional $\mathcal M$, the identity \eqref{stea} is equivalent to the fact that $J$, viewed as a map $\g\to C^\oo(\mathcal M)$, is a Lie algebra homomorphism for the Poisson bracket on $C^\oo(\mathcal M)$.

\begin{proposition}\label{spcase}
Suppose the action of $G$ on $(\mathcal M,\Om)$ is transitive and infinitesimally transitive, with injective equivariant moment map $J:\mathcal M\to\g^*$.
Then $J$ is one-to-one onto a coadjoint orbit of $G$.
Moreover, it pulls back the Kostant--Kirillov--Souriau symplectic form $\omega_\KKS$ on the coadjoint orbit to $\Om$.
\end{proposition}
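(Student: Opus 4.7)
My plan is to split the argument into two independent parts: (i) realizing $J$ as a bijection onto a coadjoint orbit, and (ii) verifying $J^*\om_\KKS=\Om$ pointwise on a spanning family of tangent vectors.

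For (i), fix $x_0\in\mathcal M$ and set $\mu_0:=J(x_0)$ together with $\mathcal O:=\Ad^*_G\mu_0$. The equivariance $J(g\cdot x)=\Ad^*_{g^{-1}}J(x)$ and transitivity of the $G$ action give $J(\mathcal M)=\mathcal O$ at once: every $x\in\mathcal M$ is $g\cdot x_0$ for some $g$, so $J(x)=\Ad^*_{g^{-1}}\mu_0\in\mathcal O$, and conversely every $\Ad^*_{g^{-1}}\mu_0\in\mathcal O$ is hit by $g\cdot x_0$. Combined with the injectivity hypothesis this yields a $G$-equivariant bijection $\mathcal M\to\mathcal O$. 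One may further check that its derivative at any point identifies $T_x\mathcal M=\{\ze_X(x):X\in\g\}$ with $T_{J(x)}\mathcal O=\{-\ad^*_XJ(x):X\in\g\}$ via \eqref{stea}, so the bijection is in fact a diffeomorphism onto the orbit carrying its canonical homogeneous smooth structure.

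For (ii), work at a point $x\in\mathcal M$ with $\mu:=J(x)$. Infinitesimal transitivity guarantees that every tangent vector is of the form $\ze_X(x)$ for some $X\in\g$, so it suffices to check equality of the two $2$-forms on such vectors. The moment map identity $d\langle J,X\rangle=i_{\ze_X}\Om$ followed by \eqref{stea} gives
\[
\Om_x\bigl(\ze_X(x),\ze_Y(x)\bigr)=d\langle J,X\rangle\bigl(\ze_Y(x)\bigr)=\langle dJ(\ze_Y(x)),X\rangle=-\langle\ad^*_Y\mu,X\rangle=\langle\mu,[X,Y]\rangle.
\]
On the orbit side, \eqref{stea} also yields $dJ_x(\ze_X(x))=-\ad^*_X\mu$, and the Kostant--Kirillov--Souriau form on $\mathcal O$ is defined so that $\om_\KKS(-\ad^*_X\mu,-\ad^*_Y\mu)=\langle\mu,[X,Y]\rangle$. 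Hence $(J^*\om_\KKS)_x(\ze_X(x),\ze_Y(x))=\Om_x(\ze_X(x),\ze_Y(x))$ for all $X,Y\in\g$, and bilinearity together with the infinitesimal transitivity finishes the proof.

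The only real obstacle is pinning down the sign conventions consistently: one must take $\om_\KKS$ normalized so that its definition on $T_\mu\mathcal O$ matches the identification $X\mapsto-\ad^*_X\mu$ coming from \eqref{stea}, and one must use the convention for $\Ad^*$ that renders $J$ infinitesimally equivariant in the stated form. Once the signs are fixed, both assertions reduce to a short, purely algebraic calculation, and neither step makes any use of the nondegeneracy of $\Om$ beyond what is already packaged into $J$ being a moment map.
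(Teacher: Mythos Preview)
Your proof is correct and follows essentially the same route as the paper's: transitivity plus equivariance plus injectivity give the bijection onto a coadjoint orbit, and the pullback identity is verified on infinitesimal generators by the chain $\Om_x(\ze_X,\ze_Y)=\langle dJ(\ze_Y),X\rangle=-\langle\ad^*_Y\mu,X\rangle=\langle\mu,[X,Y]\rangle=(\om_\KKS)_\mu(-\ad^*_X\mu,-\ad^*_Y\mu)$, which is exactly the paper's computation read in a slightly different order. Your additional remarks on the diffeomorphism and on sign conventions are not in the paper's terse version but are sound.
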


\begin{proof}
The first part is clear.
We denote the infinitesimal generators by $\ze_X^{\mathcal M}$ and $\ze_X^{\g^*}$.
The computation
\begin{multline*}
(J^*\om_{\KKS})_x(\ze_X^{\mathcal M}(x),\ze_Y^{\mathcal M}(x))
=(\om_{\KKS})_{J(x)}\bigl(\ze_X^{\g^*}(J(x)),\ze_Y^{\g^*}(J(x))\bigr)
=\langle J(x),[X,Y]\rangle
\\=-\langle\ad^*_YJ(x),X\rangle\stackrel{\eqref{stea}}{=}\langle dJ(\ze_Y^{\mathcal M}(x)),X\rangle=\Om_x(\ze_X^{\mathcal M}(x),\ze_Y^{\mathcal M}(x))
\end{multline*}
implies the second part.
\end{proof}

\end{document}